 \newtheorem{thm}{Theorem}[section]
 \newtheorem{cor}[thm]{Corollary}
 \newtheorem{lem}[thm]{Lemma}
 \theoremstyle{definition}
 \newtheorem{defn}[thm]{Definition}
 \newtheorem{rem}[thm]{Remark}
 \newtheorem{rems}[thm]{Remarks}
 \numberwithin{equation}{section}
\newcommand\ColumnTabloid[1]{%
	\begin{tikzpicture}[scale=0.7,draw/.append style={thick,black},baseline=1mm]
		\tableauRow=0
		\foreach \Row in {#1} {
			\tableauCol=1
			\foreach\k in \Row {
				\draw($(\the\tableauCol,\the\tableauRow)+(-.5,-.5)$)--++(0,1);
				\draw($(\the\tableauCol,\the\tableauRow)+(.5,-.5)$)--++(0,1);
				\draw(\the\tableauCol,\the\tableauRow)node{\k};
				\global\advance\tableauCol by 1
			}
			\global\advance\tableauRow by -1
		}
	\end{tikzpicture}
}
\DeclareMathOperator{\Ima}{Im}
\DeclareMathOperator{\Hom}{Hom}
\DeclareMathOperator{\rank}{rank}
\begin{document}

\title{Total trades, intersection matrices and Specht modules}

\author[Maliakas]{Mihalis Maliakas}
\address{%
	Department of Mathematics\\
	University of Athens\\
	Greece}
\email{mmaliak@math.uoa.gr}

\author[Stergiopoulou]{Dimitra-Dionysia Stergiopoulou}
\address{%
	Department of Mathematics\\
	University of Athens\\
	Greece}
\curraddr{%
	Department of Mathematics\\
	University of Thessaly\\
	Greece}
\email{dstergiop@math.uoa.gr}

\subjclass{05E10, 05E20, 20C30, 15A03}

\begin{abstract}Trades are important objects in combinatorial design theory that may be realized as certain elements of kernels of inclusion matrices. Total trades were introduced recently by Ghorbani, Kamali and Khosravshahi who showed that over a field of characteristic zero the vector space of trades decomposes into a direct sum of spaces of total trades. In this paper, we show that the vector space spanned by the permutations of a total trade is an irreducible representation of the symmetric group. As a corollary, the previous decomposition theorem is recovered.  Also, a basis  is obtained for the module of total trades in the spirit of Specht polynomials. In the second part of the paper we consider more generally intersection matrices and determine the irreducible decompositions of their images. This generalizes previously known results concerning ranks of special cases.
\end{abstract}
 \keywords{
	trades, total trades, inclusion matrices, intersection matrices, symmetric group}

\maketitle
\tableofcontents

\section{Introduction} Throughout this paper, we work over a field $\mathbb{K}$ of characteristic zero.
Let $t,k,n$ be integers such that $0 \le t < k < n$ and let $X$ be the set $\{1,2,\dots, n\}$. Suppose that $T_{+}$ and $T_{-}$ are two disjoint collections of $k$-subsets of $X$ with the property that the number of occurrences of every $t$-subset of $X$ in $T_{+}$ and $T_{-}$ are the same. Then $T=(T_{+},T_{-})$ is called a $t$-$(n,k)$ \textit{trade}.

The \textit{inclusion matrix} $W_{t,k}$ is a matrix with rows and columns indexed by $t$-subsets and $k$-subsets of $X$ respectively and the $(A,B)$ entry is equal to 1 if $A \subseteq B$ and is equal to 0 otherwise. Trades are null vectors of $W_{t,k}$. Among trades, the \textit{minimal trades} play a crucial role in design theory. A minimal $t$-$(n,k)$ trade may be represented in the form
\[(x_1-y_1)\cdots(x_{t+1}-y_{t+1})x_{t+2}\cdots x_k,\]
where $x_i, y_i \in X$ are distinct and multiplication is to be interpreted as union of sets (see Section 2.1 below). The symmetric group $\mathfrak{S}_n$ acts naturally on $X$ and thus on minimal trades. Their importance is highlighted by the classical result of Graver and Jurkat \cite{GJ} that the vector space of $t$-$(n,k)$ trades is generated by the permutations of any minimal $t$-$(n,k)$ trade if $t+k \le n$.

Recently Ghorbani, Kamali and Khosravshahi \cite{GKK} introduced the notion of \textit{total trade}. A $t$-$(n,k)$ total trade may be represented in the form \[(x_1-y_1)\cdots(x_{t+1}-y_{t+1}) \sum x_{i_{t+2}}\cdots x_{i_k},
\]
where the sum ranges over all $(k-t-1)$-subsets $\{x_{i_{t+2}}\cdots x_{i_k} \}$ of \\ $X \setminus \{x_1,y_1, \dots, x_{t+1}, y_{t+1}\}$. Let us denote by $\mathfrak{T}_{t,k,n}$ the set of $t$-$(n,k)$ total trades and by $\langle \mathfrak{T}_{t,k,n}\rangle$ the vector space generated by $\mathfrak{T}_{t,k,n}$. The following result was shown in \cite{GKK}.
\begin{thm}[{\cite[Theorem 11]{GKK}}]\label{thmintro} If $2k-1 \le n$, then the vector space of $t$-$(n,k)$ trades decomposes as the direct sum $\langle \mathfrak{T}_{t,k,n}\rangle \oplus \cdots \oplus \langle \mathfrak{T}_{k-1,k,n}\rangle$.
\end{thm}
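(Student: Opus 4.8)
The plan is to deduce Theorem~\ref{thmintro} from our main theorem — that the span of a total trade is an irreducible $\mathfrak{S}_{n}$-module — together with the classical representation-theoretic description of inclusion matrices. Write $M^{k}=\mathbb{K}\binom{X}{k}$ for the $\mathfrak{S}_{n}$-permutation module on the $k$-subsets of $X$, and regard $W_{t,k}$ as an $\mathfrak{S}_{n}$-equivariant map $M^{k}\to M^{t}$, so that the space $V_{t}$ of $t$-$(n,k)$ trades is $\ker W_{t,k}$. For $2k-1\le n$ the module $M^{k}$ is multiplicity-free, $M^{k}\cong\bigoplus_{i=0}^{m}S^{(n-i,i)}$ with $m=\min(k,n-k)$, and it is classical that $V_{t}=\ker W_{t,k}=\bigoplus_{i=t+1}^{m}S^{(n-i,i)}$ (this uses $t+k\le n$, which holds since $t<k$ and $2k-1\le n$). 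As a $j$-trade is automatically a $(j-1)$-trade, the $V_{j}$ form a chain $V_{k-1}\subseteq\cdots\subseteq V_{t}$ with $V_{j}/V_{j+1}\cong S^{(n-j-1,j+1)}$ for $t\le j\le m-1$ and $V_{j}=0$ for $j\ge m$.

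The crux is to identify each $\langle\mathfrak{T}_{j,k,n}\rangle$, for $t\le j\le k-1$. Since $(x_{1}-y_{1})\cdots(x_{j+1}-y_{j+1})x_{j+2}\cdots x_{k}$ is a minimal $j$-$(n,k)$ trade, a $j$-total trade is a signed sum of such trades, and therefore $\langle\mathfrak{T}_{j,k,n}\rangle$ is an $\mathfrak{S}_{n}$-submodule of $V_{j}$. If $\mathfrak{T}_{j,k,n}=\emptyset$ — which under $2k-1\le n$ happens exactly for $j=k-1$ and $n=2k-1$ — then $\langle\mathfrak{T}_{j,k,n}\rangle=0=V_{k-1}$ and there is nothing to prove. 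Otherwise $\langle\mathfrak{T}_{j,k,n}\rangle$ is nonzero (any total trade is visibly nonzero in $M^{k}$), so by our main theorem it is an irreducible submodule of the semisimple, multiplicity-free module $V_{j}$, hence it equals one of the summands $S^{(n-i,i)}$ with $j+1\le i\le m$. To force $i=j+1$, observe that a minimal $j$-trade has strength exactly $j$, and in fact a short computation shows that the $(j+1)$-subset $\{x_{1},\dots,x_{j+1}\}$ occurs with the nonzero coefficient $\binom{n-2(j+1)}{k-j-1}$ in the image under $W_{j+1,k}$ of the total trade $(x_{1}-y_{1})\cdots(x_{j+1}-y_{j+1})\sum x_{i_{j+2}}\cdots x_{i_{k}}$; thus $\langle\mathfrak{T}_{j,k,n}\rangle\not\subseteq V_{j+1}$, which pins it down as exactly the $S^{(n-j-1,j+1)}$-summand of $V_{j}$. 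In particular $\langle\mathfrak{T}_{j,k,n}\rangle$ is a complement to $V_{j+1}$ inside $V_{j}$.

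It remains to assemble. By the previous paragraph the submodules $\langle\mathfrak{T}_{j,k,n}\rangle$, $j=t,\dots,k-1$, are (zero or) pairwise non-isomorphic irreducible submodules of the multiplicity-free module $V_{t}$ accounting for all of its isotypic components $S^{(n-i,i)}$, $t+1\le i\le m$; equivalently, each $\langle\mathfrak{T}_{j,k,n}\rangle$ complements $V_{j+1}$ in $V_{j}$, so the chain telescopes. In either case
\[
\langle\mathfrak{T}_{t,k,n}\rangle\oplus\langle\mathfrak{T}_{t+1,k,n}\rangle\oplus\cdots\oplus\langle\mathfrak{T}_{k-1,k,n}\rangle \;=\; \bigoplus_{i=t+1}^{m}S^{(n-i,i)} \;=\; V_{t},
\]
the left-hand sum being direct, and $V_{t}$ is precisely the space of $t$-$(n,k)$ trades, which is the assertion of Theorem~\ref{thmintro}.

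The main obstacle is the input from our main theorem, i.e.\ the irreducibility of $\langle\mathfrak{T}_{j,k,n}\rangle$; once that is granted, the rest is bookkeeping. To prove it one expects to exhibit a spanning set of total trades indexed by the standard Young tableaux of shape $(n-j-1,j+1)$ and show, via a straightening or leading-term argument against a suitable total order on tabloids, that this set has the correct cardinality $\binom{n}{j+1}-\binom{n}{j}=\dim S^{(n-j-1,j+1)}$; this is also what produces the Specht-type basis mentioned in the abstract. Within the present deduction, the only point requiring genuine care rather than accounting is the behaviour for $j$ near $k-1$ together with the boundary case $n=2k-1$, which is exactly where the hypothesis $2k-1\le n$ is used.
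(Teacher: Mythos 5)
Your proof follows the same route as the paper's proof (Corollary~\ref{cor2}): decompose $\ker W_{t,k}$ via Young's rule together with Gottlieb's maximal-rank theorem, and then match the irreducible summands with $\langle\mathfrak{T}_{j,k,n}\rangle$ using Theorem~\ref{main0} and the multiplicity-freeness of $M_k$. The extra coefficient computation you carry out to force $i=j+1$ is superfluous, since Theorem~\ref{main0} already asserts $\langle\mathfrak{T}_{j,k,n}\rangle\cong S^{(n-j-1,j+1)}$ (not merely that it is irreducible), which by multiplicity-freeness immediately identifies it as the unique $S^{(n-j-1,j+1)}$-summand of $M_k$.
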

This was used in \cite{GKK} to derive decomposition results concerning the vector space generated by permutations of a trade or permutations of a signed design. 

We recall that the irreducible modules of the symmetric group $\mathfrak{S}_n$ are naturally parameterized by partitions of $n$. For a partition $\lambda$ of $n$ we denote by $S^{\lambda}$ the corresponding irreducible module. This is called a Specht module. In our first main result we prove that $\langle \mathfrak{T}_{i,k,n}\rangle$ is an irreducible module of the symmetric group $\mathfrak{S}_n$ (see Theorem  \ref{main0}). The precise statement is the following.
\begin{thm}\label{intromain0}
	Let $t< k $ and $t+k \le n$. Then as $\mathfrak{S}_n$-modules the  subspace $\langle \mathfrak{T}_{t,k,n}\rangle$ of $M_k$ spanned by the set $\mathfrak{T}_{t,k,n}$ of $t$-$(n,k)$ total trades is isomorphic to the Specht module $S^{(n-t-1,t+1)}$.
\end{thm}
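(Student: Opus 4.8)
The plan is to exhibit a single $\mathfrak{S}_n$-equivariant map that carries $S^{(n-t-1,t+1)}$ onto $\langle\mathfrak{T}_{t,k,n}\rangle$; irreducibility of the source then finishes everything. Write $M_{t+1}$ for the $\mathbb{K}\mathfrak{S}_n$-permutation module on the $(t+1)$-subsets of $X$, identified as usual with the Young permutation module $M^{(n-t-1,t+1)}$ (a tabloid of shape $(n-t-1,t+1)$ being recorded by its second row), and let $\varphi\colon M_{t+1}\to M_k$ be the transpose of the inclusion matrix $W_{t+1,k}$, so $\varphi$ sends a $(t+1)$-subset $A$ of $X$ to $\sum_{B\supseteq A,\ |B|=k}B$. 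I would first observe that for any tableau $U$ of shape $(n-t-1,t+1)$ the polytabloid $e_U$ equals, in $M_{t+1}$, the product $(x_1-y_1)\cdots(x_{t+1}-y_{t+1})$, where $x_i,y_i$ are the bottom and top entries of the $i$-th column and the product is expanded by choosing $x_i$ (sign $+$) or $y_i$ (sign $-$) from each factor; this is immediate from the fact that the column stabilizer of $U$ is $\prod_{i=1}^{t+1}\langle(x_i\ y_i)\rangle$. Since $t<k$ and $n\ge t+k+1$ — we may assume the latter, since otherwise $X\setminus\{x_1,y_1,\dots,x_{t+1},y_{t+1}\}$ has fewer than $k-t-1$ elements and every total trade is $0$ — the shape $(n-t-1,t+1)$ is a genuine partition, each such $e_U$ is a nonzero element of $S^{(n-t-1,t+1)}\subseteq M^{(n-t-1,t+1)}$, and by irreducibility it generates $S^{(n-t-1,t+1)}$.

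The crux is the identity, for distinct $x_1,y_1,\dots,x_{t+1},y_{t+1}\in X$,
\[
\varphi\bigl((x_1-y_1)\cdots(x_{t+1}-y_{t+1})\bigr)\;=\;(x_1-y_1)\cdots(x_{t+1}-y_{t+1})\sum_{S}\ \prod_{s\in S}s,
\]
the sum on the right running over the $(k-t-1)$-subsets $S$ of $X\setminus\{x_1,y_1,\dots,x_{t+1},y_{t+1}\}$; in other words, $\varphi$ carries this polytabloid exactly to a $t$-$(n,k)$ total trade. To prove it I would expand the left side as a signed sum, over the $2^{t+1}$ choices $z_i\in\{x_i,y_i\}$, of $\sum_{B\supseteq\{z_1,\dots,z_{t+1}\},\ |B|=k}B$, and read off the coefficient of a fixed $k$-subset $B$: it is the sum of the corresponding signs over those choices with $\{z_1,\dots,z_{t+1}\}\subseteq B$. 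If $B$ avoids a pair $\{x_i,y_i\}$ entirely there is no such choice; if $B$ contains both members of some pair $\{x_i,y_i\}$, then both values of $z_i$ occur and summing over that coordinate contributes a factor $1-1=0$; so only the $k$-subsets meeting each pair $\{x_i,y_i\}$ in exactly one element survive, each with a single surviving choice, and what remains is precisely the displayed total trade. This pairwise cancellation is the one step needing genuine bookkeeping — organizing $k$-subsets by their intersection pattern with the $2t+2$ marked points and keeping the signs straight — and I expect it to be the main obstacle; everything else is formal.

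Given the identity, the theorem is short. Every $t$-$(n,k)$ total trade is the right-hand side of the display for suitable distinct $x_i,y_i$, hence equals $\varphi(e_U)$ for a polytabloid $e_U\in S^{(n-t-1,t+1)}$; and since each such $e_U$ generates $S^{(n-t-1,t+1)}$, the $\mathfrak{S}_n$-module generated by even one total trade already contains $\varphi\bigl(S^{(n-t-1,t+1)}\bigr)$. Therefore $\langle\mathfrak{T}_{t,k,n}\rangle=\varphi\bigl(S^{(n-t-1,t+1)}\bigr)$, which, being the image of the irreducible module $S^{(n-t-1,t+1)}$ under the equivariant map $\varphi$, is either $0$ or isomorphic to $S^{(n-t-1,t+1)}$; it is nonzero because the coefficient of $\{x_1,\dots,x_{t+1}\}\cup S_0$ in any total trade is $\pm1$ for a fixed admissible $S_0$. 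Hence $\langle\mathfrak{T}_{t,k,n}\rangle\cong S^{(n-t-1,t+1)}$. Finally, $\varphi$ is injective on $S^{(n-t-1,t+1)}$ (a nonzero map out of an irreducible), so it sends the standard basis $\{e_U\}$ of $S^{(n-t-1,t+1)}$, with $U$ ranging over the standard Young tableaux of shape $(n-t-1,t+1)$, to a basis of $\langle\mathfrak{T}_{t,k,n}\rangle$ consisting of total trades — the Specht-type basis promised in the introduction. (Combined with the known rank of inclusion matrices this also pins down the full decomposition of the space of trades and recovers Theorem \ref{thmintro}.)
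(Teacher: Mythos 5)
Your proof is correct, and it follows a genuinely different route from the paper. The paper realizes $S^{(n-t-1,t+1)}$ as a quotient $\tilde{M}^{\lambda}/Q^{\lambda}$ of the column tabloid module, defines a surjection $h\colon\tilde{M}^{\lambda}\to\langle\mathfrak{T}_{t,k,n}\rangle$ by sending each column tabloid to the corresponding total trade, and then verifies that the Garnir-type generators $g_{U,c}$ of $Q^{\lambda}$ are annihilated (three cases, the middle one requiring a telescoping identity for $\Sigma_m$, their Lemma~3.3). You instead realize $S^{(n-t-1,t+1)}$ as the polytabloid submodule of $M^{(n-t-1,t+1)}\cong M_{t+1}$ and apply the natural ``up'' map $\varphi=W_{t+1,k}^{\mathsf T}\colon M_{t+1}\to M_k$; the crux is the coefficient calculation showing $\varphi(e_U)$ is exactly a total trade, which amounts to the clean pairwise cancellation $1-1=0$ whenever a $k$-subset contains both members of a pair $\{x_i,y_i\}$. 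Both finish by Schur's lemma. Your version is arguably more economical: it uses a pre-existing equivariant map and avoids the case analysis of Garnir relations, and it immediately yields the Specht-type basis as $\{\varphi(e_U): U \ \text{standard}\}$, matching the paper's Corollary~3.5. The paper's version has the advantage of being phrased intrinsically in terms of the presentation, which is what they reuse in Section~5. One small caveat in your write-up: when $n=t+k$ you assert that every total trade is zero and set the case aside; that is accurate (there are not enough points outside $\{x_1,y_1,\dots,x_{t+1},y_{t+1}\}$, or not even $2t+2$ distinct marked points), but the theorem's conclusion then reads $\langle\mathfrak{T}_{t,k,n}\rangle\cong S^{(n-t-1,t+1)}$ with a shape that may fail to be a partition when $k=t+1$, so a sentence acknowledging that the statement is only meaningful for $n\ge t+k+1$ (or, equivalently, $n\ge 2t+2$ given $k>t$) would make this airtight; the paper's own proof has the same implicit assumption.
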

From this we obtain a basis of the $\mathbb{Z}$-module spanned by $ \mathfrak{T}_{i,k,n}$ (Corollary \ref{basis}). We also recover the result of Graver and Jurkat mentioned above. Our proofs are independent of \cite{GKK} and we recover the decomposition in Theorem \ref{thmintro} as the irreducible decomposition of the kernel of the inclusion matrix $W_{t,k}$.

Suppose $l$ is an integer such that $0 \le l \le t$. The \textit{intersection matrix} $U_{t,k,l}$ is a matrix with rows and columns indexed by $t$-subsets and $k$-subsets of $X$ respectively and the $(A,B)$ entry is equal to 1 is $|A \cap B| =l$ and is equal to $0$ otherwise. These generalize the inclusion matrices since $U_{t,k,t} =W_{t,k}$. Such matrices have been studied in different contexts in combinatorics such as design theory and association schemes. Ghareghani, Ghorbani and Mohammad-Noori \cite{GGM} studied these matrices and found an expression for their rank. The matrices $U_{t,k,l}$ as $l$ varies from $0$ to $t$ correspond to basis elements of the homomorphism space $\Hom_{\mathfrak{S}_n}(M_k,M_t)$ between the permutation modules $M_k $ and $M_t$ of the symmetric group ${\mathfrak{S}_n}$. We generalize the previous result on ranks by considering any linear operator in $\Hom_{\mathfrak{S}_n}(M_k,M_t)$. In order to state the result we need some notation.
Suppose $l$ is a nonnegative integer such that $l \le t$. For $j=0,1, \dots, t$ we define the integers \[\lambda_j(t,k,n;l) :=\sum_{s=0}^{j}(-1)^{j-s}\tbinom{j}{s}\tbinom{k-s}{l-s}\tbinom{n-k-j+s}{t-l-j+s},\]
(see Definition \ref{lambda}).
Our second main result is the following (see Corollary \ref{main3}).
\begin{thm}
	\label{intromain3}Consider a linear combination of intersection matrices $A= \sum_{l=0}^{t} c_l U_{t,k,l}$ where $c_l \in \mathbb{K}$. Then the irreducible decomposition of the image of $A$ is 
	\begin{equation*}\label{irrdec3} \bigoplus_{j \in J(t,k,n)}S^{(n-j,j)},\end{equation*} where $J(t,k,n)$ is the subset of $\{0,1, \dots, t\}$ consisting of those $j$ such that there exists $l \in \{0,1,\dots,t\}$ satisfying $c_l \neq 0$ and $\lambda_j(t,k,n;l) \neq 0$. In particular, the rank of the matrix $A$ is equal to 
	\begin{equation*}\label{introrank1} \rank(A) = \sum_{j \in J(t,k,n)}\big(\tbinom{n}{j} - \tbinom{n}{j-1} \big).\end{equation*}
\end{thm}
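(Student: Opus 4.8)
The plan is to decompose everything in the category of $\mathfrak{S}_n$-modules using the classical branching/multiplicity-free structure of the permutation modules $M_t$ and $M_k$. Recall that for $t \le n/2$ one has $M_t \cong \bigoplus_{i=0}^{t} S^{(n-i,i)}$, and each summand appears with multiplicity one; the same holds for $M_k$. Consequently $\Hom_{\mathfrak{S}_n}(M_k, M_t) \cong \bigoplus_{i=0}^{\min(t,k)} \Hom_{\mathfrak{S}_n}(S^{(n-i,i)}, S^{(n-i,i)})$, which is $(t+1)$-dimensional when $t \le k \le n/2$ (we may assume $k \le n/2$, as otherwise $M_k \cong M_{n-k}$ and one reduces to that case), and the matrices $U_{t,k,0},\dots,U_{t,k,t}$ form a basis of it by the cited work of \cite{GGM}. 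The key point is therefore to understand how each individual $U_{t,k,l}$ acts on the isotypic component $S^{(n-j,j)}$ of $M_k$: since Schur's lemma forces this action to be multiplication by a scalar, call it $\mu_j(t,k,n;l)$, the whole problem becomes the computation of these eigenvalues.

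First I would set up the eigenvalue computation concretely. Fixing a highest-weight-type vector in the copy of $S^{(n-j,j)}$ inside $M_k$ — for instance the image under the standard intertwiner of a Specht polynomial, or more simply a suitable alternating sum of $k$-sets built from a $2\times j$ "polytabloid" pattern — one applies $U_{t,k,l}$ and reads off the coefficient. This is a finite inclusion–exclusion count: a $t$-set $A$ contributes to $U_{t,k,l}$ applied to a $k$-set $B$ exactly when $|A\cap B| = l$, and tracking how this interacts with the $j$ "mismatched coordinates" of the polytabloid produces precisely an alternating binomial sum. I expect this sum to be exactly $\lambda_j(t,k,n;l) = \sum_{s=0}^{j}(-1)^{j-s}\binom{j}{s}\binom{k-s}{l-s}\binom{n-k-j+s}{t-l-j+s}$ as in Definition \ref{lambda}; verifying the combinatorial identity $\mu_j(t,k,n;l) = \lambda_j(t,k,n;l)$ is the technical heart, and is most cleanly done by the generating-function/Vandermonde manipulation or by an explicit evaluation of $U_{t,k,l}$ on the chosen vector. (This is also where Theorem \ref{intromain0} fits in: the case $l=t$ gives $U_{t,k,t}=W_{t,k}$, whose kernel must be $\bigoplus_{j : \lambda_j(t,k,n;t)=0} S^{(n-j,j)}$, consistent with the total-trade description.)

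Once the eigenvalues are identified, the rest is formal. For $A = \sum_{l=0}^t c_l U_{t,k,l}$, the action of $A$ on the $S^{(n-j,j)}$-isotypic component of $M_k$ is multiplication by $\sum_{l=0}^t c_l\,\lambda_j(t,k,n;l)$. Hence $S^{(n-j,j)}$ lies in the image of $A$ if and only if this scalar is nonzero; and by the multiplicity-one structure of $M_t$, the image of $A$ is exactly the direct sum over those $j$. One then checks that "$\sum_l c_l \lambda_j \ne 0$" holds precisely when there is some $l$ with $c_l\ne 0$ and $\lambda_j(t,k,n;l)\ne 0$ — this needs a small argument, since a priori cancellation could occur, but it follows because the support condition is phrased exactly so that $J(t,k,n)$ records the indices where at least one active $\lambda_j$ survives; more carefully, one shows the vectors $(\lambda_j(t,k,n;0),\dots,\lambda_j(t,k,n;t))$ behave well enough that no accidental cancellation of a nonzero term forces the whole sum to vanish for a generic-looking $c$, or one simply states the result for the stated $J$. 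Finally, $\dim S^{(n-j,j)} = \binom{n}{j}-\binom{n}{j-1}$ gives the rank formula by summing over $j \in J(t,k,n)$.

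The main obstacle is the eigenvalue identity in the middle step: proving that the scalar by which $U_{t,k,l}$ acts on $S^{(n-j,j)}\subseteq M_k$ equals the prescribed alternating binomial sum $\lambda_j(t,k,n;l)$. Everything before it (representation-theoretic setup) and after it (extracting image and rank) is routine given the multiplicity-one decompositions of $M_t$ and $M_k$; the combinatorial core is where the real work lies, and I would handle it either by an explicit action of $U_{t,k,l}$ on a well-chosen polytabloid generator of the relevant Specht copy, or by recognizing the count as an instance of a Hahn-polynomial / Eberlein-type evaluation from the association-scheme literature.
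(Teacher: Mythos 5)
Your outline follows the same strategy the paper uses once its general-linear-group machinery is stripped away: use the multiplicity-one decompositions of $M_k$ and $M_t$ (Young's rule), observe that any $\mathfrak{S}_n$-map $M_k \to M_t$ restricts on each $S^{(n-j,j)}$-isotypic component to a scalar multiple of a fixed intertwiner, and identify that scalar with $\lambda_j(t,k,n;l)$. The difference in route is that the paper does the scalar computation not in $M_k \to M_t$ directly but upstairs over $G = GL_N$, in $D_{n-k}\otimes D_k \to D_{n-t}\otimes D_t$: the maps $\Psi_l$ are built from comultiplication, swap, and multiplication in the divided power algebra, the scalar is extracted by evaluating on $1^{(n-k)}\otimes 2^{(k)}$ and straightening in the Weyl module $K_{(n-j,j)}$ via Lemma \ref{lemglas}, and the result is then pushed down to $\mathfrak{S}_n$-modules by the Schur functor. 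This is what makes the alternating binomial sum drop out cleanly; your suggestion of computing directly on a polytabloid in $M_k$ is plausible in principle but is precisely the ``technical heart'' you acknowledge not having carried out, so as written the proposal has a genuine gap at its central step.

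There is a second point where your instincts are better than your conclusion. You correctly worry that ``$\sum_l c_l\,\lambda_j(t,k,n;l)\neq 0$'' is not a priori equivalent to ``$\exists\,l$ with $c_l\neq 0$ and $\lambda_j(t,k,n;l)\neq 0$,'' but you then try to argue the concern away. Don't: the two conditions are genuinely different, and what the argument (yours and the paper's alike) actually establishes is the first one. A concrete instance: take $n=4$, $t=1$, $k=2$, $c_0=1$, $c_1=-1$. Then $\lambda_0(1,2,4;0)=\lambda_0(1,2,4;1)=2$, so $\sum_l c_l\lambda_0 = 0$ and $S^{(4)}$ is not in the image of $A = U_{1,2,0}-U_{1,2,1}$ (indeed $A$ kills the all-ones vector of $M_2$); yet $j=0$ would lie in $J(1,2,4)$ under the ``there exists $l$'' reading. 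So the criterion that actually falls out of the multiplicity-one argument is $J(t,k,n)=\{\,j : \sum_{l} c_l\lambda_j(t,k,n;l)\neq 0\,\}$, and the paragraph where you speculate that ``the vectors $(\lambda_j(\cdot;0),\dots,\lambda_j(\cdot;t))$ behave well enough that no accidental cancellation occurs'' is not a defensible step and should be dropped. Replace the stated $J$ by the cancellation-aware version and the rest of your formal deduction (image, then rank via $\dim S^{(n-j,j)}=\binom{n}{j}-\binom{n}{j-1}$) is sound.
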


This is obtained from an analogous result that we prove for representations of the general linear group (see Theorem \ref{main1}).

 The second statement of the above theorem is rather surprising as the rank of a linear combination of matrices is in general not well behaved with respect to the ranks of the individual matrices. In this case, however, we have a decomposition of the images into a direct of irreducibles each of which has multiplicity equal to 1. 

The paper is organized as follows. Section 2 is devoted to background material concerning mainly representations of the symmetric group. In Section 3 we prove our first main result (see Theorem \ref{main0}) and discuss certain corollaries. The second part of the paper is independent of the first part but our techniques are similar. Preliminaries on polynomial representations of the general linear group are considered in Section 4. Our second main result (see Theorem \ref{main1} and Corollary \ref{main3}) is proved in Section 5.

\section{Background}
\subsection{Boolean algebra}Let $n$ be a positive integer and let $X$ be the set $\{1,2,\dots,n\}$. Let $2^{[X]}$ be the vector space over $\mathbb{K}$ with basis the subsets of $X$. For a nonnegative integer $k$, let $M_k$ be the subspace of $2^{[X]}$ spanned by the $k$-subsets of $X$. If $\alpha =\sum_{Y\subseteq X}\alpha_{Y}Y$ and $\beta =\sum_{Z\subseteq X}\beta_{Z}Z$ are elements of $2^{[X]}$, define their product by $\alpha \beta =\sum_{Y,Z}\alpha_Y \beta_Z(Y \cup Z)$. This makes $2^{[X]}$ a commutative ring with identity the empty set. If $\{x_1, x_2,\dots, x_k\}$ is a $k$-subset of $X$, we will denote the corresponding basis element of $M_k$ by $x_1x_2\cdots x_k$. Hence expressions such as $(x_1-y_1)\cdots(x_{t+1}-y_{t+1})x_{t+2}\cdots x_k$, where $x_i, y_j \in X$ are distinct, have a precise meaning as elements of $2^{[X]}$ and $M_k$. 

 We have the natural action of the symmetric group $\mathfrak{S}_n$ on $X$ and hence on subsets of $X$.  Then $M_k$ is the natural permutation module for $\mathfrak{S}_n$. Also we have $2^{[X]} =\oplus_{i \ge 0}M_i$.

For nonnegative integers $t \le k$, let $\psi_k^{(k-t)}: M_k \to M_t$ be the $\mathfrak{S}_n$-map defined by \begin{equation}\psi_k^{(k-t)}(x_1x_2\cdots x_k) = \sum x_{i_1}x_{i_2}\cdots x_{i_t},
	\end{equation}
	where the sum is over all $t$-subsets $\{x_{i_1}, x_{i_2}, \dots, x_{i_t}\}$ of $\{x_1,x_2,\dots,x_k\}$. It is clear that with respect to some total order on $k$-subsets and $t$-subsets of $X$ the matrix of the map $\psi_k^{(k-t)}$ is the inclusion matrix $W_{t,k}$. Such maps have been considered in different contexts, for example in the modular representation theory of the symmetric groups and related homologies \cite{Ja, FrYa, MnS, Jo}.
\subsection{Representations of the symmetric group}
In this section we recall some facts from the representation theory of the symmetric group $\mathfrak{S}_n$ over a field of characteristic zero that will be used in the sequel.

A partition of $n$ is a sequence $\lambda = (\lambda_1, \dots, \lambda_m)$ of nonnegative integers such $\lambda_1 \ge \dots \ge \lambda_m$ and $\lambda_1+\cdots+\lambda_m = n$.  Since the characteristic of $\mathbb{K}$ is zero, the irreducible representations of $\mathfrak{S}_n$ over $\mathbb{K}$ are parameterized by partitions $\lambda$ of $n$. We denote by $S^{\lambda}$  the irreducible module corresponding to $\lambda$. This is called a  Specht module. Of the various constructions of $S^{\lambda}$ that are available, we will use one that is based on column tabloids.  The facts we need may be found in \cite[Chapter 7]{F}. In order for this paper to be reasonably self contained, we present the main points below.

Since in the sequel we will need only partitions with at most two nonzero parts, our discussion will be restricted to these.

For a partition $\lambda$ of $n$, a Young tableau of shape $\lambda$ is a filling of the Young diagram of $\lambda$ with distinct entries from the set $\{1,2, \dots, n\}$. For example, a Young tableau of shape $\lambda=(5,2)$ is 
\begin{center}
$U=$ \begin{ytableau}
	2 & 7 & 1  & 4 & 6\\
	3 & 5  \\
	\end{ytableau}
\end{center}

Let us denote by $\mathcal{T}_\lambda$ the set of Young tableaux of shape $\lambda$. Let $\tilde{M}^\lambda$ be the vector space generated by $\mathcal{T}_\lambda$ subject to the relations $U+U'=0$, if $U'$ is obtained from $U \in \mathcal{T}_\lambda$ by a transposition of two entries in the same column of $U$.  The group $\mathfrak{S}_n$ acts on $\mathcal{T}_\lambda$ by permuting the entries. It follows that $\tilde{M}^\lambda$ is an $\mathfrak{S}_n$-module. For $U \in \mathcal{T}_\lambda$, we denote by $q=q_U \in \tilde{M}^\lambda$ the corresponding coset. The elements $q \in \tilde{M}^\lambda$ are called \textit{column tabloids}. It is clear that these generate $\tilde{M}^\lambda$ as a vector space. We may denote column tabloids by erasing the horizontal lines of the corresponding Young tableaux.  For example, in $\tilde{M}^{(5,2)}$ we have 
\begin{center}
	\begin{tabular}{ |c|c|c|c|c| } 
		2 & 7 & 1 & 4 & 6  \\ 
		3 & 5  \\ 
\end{tabular} \ + \  \begin{tabular}{ |c|c|c| c|c| } 
2 & \textbf{5} & 1 & 4 & 6  \\ 
3 & \textbf{7}  \\ 
\end{tabular} \ = 0. \end{center}

 We have highlighted the entries in the second column that were exchanged.
 
 Let $\lambda = (\lambda_1, \lambda_2)$ be a partition of $n$, $U$ a Young tableau of shape $\lambda$,  $q$ the corresponding column tabloid  in $\tilde{M}^{\lambda}$ and let $1 \le c \le \lambda_1-1$.  With this data we define an element  $g_{U,c} \in \tilde{M}^{\lambda}$. We distinguish two cases. \begin{itemize}
	\item Suppose $c \le \lambda_2$. Let $U_i$ be the Young tableau obtained from $U$ by exchanging the top element of column $c+1$ with the $i$th element of column $c$. Let $q_i$ be the column tabloid corresponding to $U_i$. Define $g_{U,c}=q-q_1-q_2$.
	\item Suppose $c > \lambda_2$. Let $U_3$ be the Young tableau obtained from $U$ by exchanging the two elements in columns $c$ and $c+1$. Let $q_3$ be the corresponding column tabloid. Define $g_{U,c}=q-q_3$.\end{itemize}
For example, if $c=1$ and $U$ is the Young tableau of the previous example, then $g_{U,1}$ is the element
\begin{center}
	\begin{tabular}{ |c|c|c|c|c| } 
		2 & 7 & 1 & 4 & 6  \\ 
		3 & 5  \\ 
	\end{tabular} \ - \  \begin{tabular}{ |c|c|c| c|c| } 
		\textbf{7} & \textbf{2} & 1 & 4 & 6  \\ 
		3 & 5  \\ 
	\end{tabular} - \  \begin{tabular}{ |c|c|c| c|c| } 
	2 & \textbf{3} & 1 & 4 & 6  \\ 
	\textbf{7} & 5  \\ 
	\end{tabular}. \end{center}	
The exchanged elements have been highlighted with boldface font.	
	
\begin{defn}\label{Q}
Let $Q^{\lambda}$ be the subspace of $\tilde{M}^{\lambda}$ spanned by all the $g_{U,c}$ as $U$ varies over all Young tableaux of shape $\lambda$ and $1 \le c \le \lambda_1-1$.
\end{defn} This is an $\mathfrak{S}_n$ submodule of $\tilde{M}^{\lambda}$ and we have the following well known result.
\begin{thm}\label{pres}The quotient $\tilde{M}^{\lambda} / Q^{\lambda}$ is isomorphic to the Specht module $S^{\lambda}$.
\end{thm}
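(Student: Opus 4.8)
\textit{Proof proposal.} The plan is to realize $S^{\lambda}$ through polytabloids and to write down an explicit $\mathfrak{S}_n$-isomorphism. Write $M^{\lambda}$ for the permutation module of \emph{row} tabloids $\{V\}$ of shape $\lambda$, and recall the classical model $S^{\lambda}\subseteq M^{\lambda}$, namely the span of the polytabloids $e_V=\sum_{\pi\in C_V}\sign(\pi)\{\pi V\}$, where $C_V$ is the column stabilizer of the Young tableau $V$. I would first define a map $\theta\colon\tilde{M}^{\lambda}\to M^{\lambda}$ on Young tableaux by $\theta(V)=e_V$. Transposing two entries in a single column of $V$ is an element of $C_V$ of sign $-1$, so it sends $e_V$ to $-e_V$; since this is exactly the relation defining $\tilde{M}^{\lambda}$, the map $\theta$ is well defined, and it is clearly $\mathfrak{S}_n$-equivariant. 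As the $e_V$ span $S^{\lambda}$, the image of $\theta$ is all of $S^{\lambda}$.

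The next step is to verify $Q^{\lambda}\subseteq\ker\theta$, i.e. $\theta(g_{V,c})=0$ for every Young tableau $V$ and every $1\le c\le\lambda_1-1$. If $c>\lambda_2$, columns $c$ and $c+1$ are single boxes in the first row; writing $u=V(1,c)$, $v=V(1,c+1)$ we have $V_3=(u\,v)V$, and since $u,v$ lie in the first row and $C_V$ fixes them pointwise, $\{V\}=\{V_3\}$ and $C_V=C_{V_3}$, whence $e_V=e_{V_3}$ and $\theta(g_{V,c})=e_V-e_{V_3}=0$. If $c\le\lambda_2$, write the column $c$ entries as $a_1$ (top), $a_2$ (bottom) and the top entry of column $c+1$ as $b$; then $V_1=(a_1\,b)V$ and $V_2=(a_2\,b)V$, so by equivariance $\theta(g_{V,c})=\bigl(1-(a_1\,b)-(a_2\,b)\bigr)e_V$. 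The operator $1-(a_1\,b)-(a_2\,b)$ is the Garnir element associated with the two entries $\{a_1,a_2\}$ of column $c$ and the entry $\{b\}$ of column $c+1$: the elements $1,(a_1\,b),(a_2\,b)$ with signs $+,-,-$ form a complete set of coset representatives of $\mathfrak{S}_{\{a_1,a_2\}}$ in $\mathfrak{S}_{\{a_1,a_2,b\}}$, and since $2+1>2$ the corresponding Garnir relation gives $\bigl(1-(a_1\,b)-(a_2\,b)\bigr)e_V=0$. Hence $\theta$ descends to a surjection $\bar\theta\colon\tilde{M}^{\lambda}/Q^{\lambda}\twoheadrightarrow S^{\lambda}$.

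It remains to show $\bar\theta$ is injective, and since all spaces are finite dimensional it suffices to prove $\dim_{\mathbb{K}}\tilde{M}^{\lambda}/Q^{\lambda}\le\dim_{\mathbb{K}}S^{\lambda}=f^{\lambda}$, the number of standard Young tableaux of shape $\lambda$. For this I would prove the straightening statement: modulo $Q^{\lambda}$, every column tabloid is a $\mathbb{K}$-linear combination of column tabloids of standard tableaux (rows and columns strictly increasing). Given any tableau, use the sign relations to make each column increasing; if some row then fails to increase across columns $c,c+1$, rewrite the tabloid using $g_{V,c}$ — together with a sign relation in column $c+1$ when the offending entry sits in the bottom row — and argue by induction on a suitable order on tableaux (for instance the dominance order on the associated row tabloids, exactly as in the classical straightening of \cite[Chapter 7]{F}) that the rewriting terminates with a combination of standard tabloids. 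As there are exactly $f^{\lambda}$ standard tableaux, this gives the required bound, so $\bar\theta$ is an isomorphism; equivalently, once the spanning statement is in hand one deduces directly, using that the standard polytabloids are linearly independent in $M^{\lambda}$, that the standard column tabloids are a basis of $\tilde{M}^{\lambda}/Q^{\lambda}$.

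I expect the straightening statement to be the only real difficulty: one must check that the rather restricted relations $g_{V,c}$, which only ever move the top entry of column $c+1$, together with the sign relations are enough to reduce an arbitrary tableau to standard form, and that the rewriting strictly decreases the chosen order so that it halts. For the two-row shapes needed here this is routine along the lines indicated, and in general it is the content of \cite[Chapter 7]{F}.
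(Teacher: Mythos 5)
Your proof takes a genuinely different route from the paper's. The paper's proof is a citation: it invokes Exercise~16 of Section~7.4 of \cite{F} to show that in characteristic zero the restricted relations $g_{U,c}$ of Definition~\ref{Q} generate the same submodule as the larger set of exchange relations Fulton uses to define $Q^{\lambda}$ on p.~98, after which Proposition~4 of that section gives the isomorphism. You instead build the isomorphism directly through polytabloids, and the parts you carry out are correct: $\theta$ is well defined and $\mathfrak{S}_n$-equivariant, its image is $S^{\lambda}$, and the identities $\theta(g_{V,c})=\bigl(1-(a_1\,b)-(a_2\,b)\bigr)e_V=0$ (for $c\le\lambda_2$, via the Garnir relation) and $\theta(g_{V,c})=e_V-e_{V_3}=0$ (for $c>\lambda_2$) give a surjection $\bar\theta\colon\tilde M^{\lambda}/Q^{\lambda}\twoheadrightarrow S^{\lambda}$.

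The gap is exactly the straightening, and it is more serious than ``routine'': it is precisely the content of the exercise the paper cites, and it fails as you describe it. Take $\lambda=(2,2)$, $n=4$, label the six column-increasing column tabloids by their column sets, let $q_1,q_2$ (columns $\{1,2\},\{3,4\}$ and $\{1,3\},\{2,4\}$) be the standard ones, and let $q_3$ be the tabloid with columns $\{1,4\},\{2,3\}$. Applying $g_{V,1}$ to the column-increasing representative of $q_3$ gives $q_3=q_5-q_1$, while applying it to the representative of $q_5$ gives $q_5=q_3+q_1$ --- the same relation, so the rewriting cycles. The sign flip in column $2$ you propose does not help; it merely trades $q_3$ for $q_6$ and back. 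Nor can any order on row tabloids force termination, because the first Garnir term $q_{V_1}$ always has the same row tabloid as $q_V$. To pin down $q_3$ one must combine the relations $g_{V,1}$ coming from several tableaux $V$, including ones with decreasing columns (which supply the additional relation $q_5=q_4+q_6$), and then \emph{divide by $2$}, arriving at $q_3=q_2-q_1$. Over $\mathbb{F}_2$ the quotient by your $Q^{(2,2)}$ has dimension $3$ rather than $2$, so the division is unavoidable. In short, the spanning statement is a genuine characteristic-zero fact, not a routine termination argument; to close the gap you need either to cite Fulton's Exercise~16 as the paper does, or to prove directly that the restricted relations span the full Garnir submodule once the relevant integers are invertible.
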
 
\begin{proof} Since the characteristic of $\mathbb{K}$ is equal to zero, it follows from \cite[Exercise 16, Section 7.4]{F} that our $Q^{\lambda}$ is equal to the subspace of $\tilde{M}^{\lambda}$ defined in \cite[p. 98]{F} (which is denoted there also by $Q^{\lambda}$). Then the result follows from \cite[Proposition 4, Section 7.4]{F}.\end{proof}	

Next we recall the standard basis theorem for Specht modules. Let us fix a total order on $X$. A Young tableau $U$ is called standard if the entries in each row of $U$ increase from left to right and if the entries in each column increase from top to bottom. 

\begin{thm}[Standard basis theorem]\label{stb} A basis of the Specht module  $\tilde{M}^{\lambda} / Q^{\lambda}$ is formed by the cosets $q+Q^{\lambda}$, where $q$ varies over all column tabloids that correspond to standard tableaux of shape $\lambda$.
\end{thm}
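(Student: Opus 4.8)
The statement is the classical standard basis theorem for Specht modules, proved for instance in \cite[Chapter 7]{F}, so the cleanest option is simply to cite it; below I sketch the argument, restricted throughout (as in the rest of the section) to partitions $\lambda=(\lambda_1,\lambda_2)$ with at most two parts. By Theorem~\ref{pres} we have $\tilde{M}^{\lambda}/Q^{\lambda}\cong S^{\lambda}$, and it is well known (e.g.\ from the hook length formula, or from the branching rule, and independently of the column-tabloid construction) that $\dim_{\mathbb{K}}S^{(\lambda_1,\lambda_2)}$ equals the number $f^{\lambda}$ of standard tableaux of shape $\lambda$, namely $\binom{n}{\lambda_2}-\binom{n}{\lambda_2-1}$. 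Hence it suffices to prove \emph{either} that the standard cosets $q+Q^{\lambda}$ span $\tilde{M}^{\lambda}/Q^{\lambda}$, \emph{or} that they are linearly independent; whichever we establish directly, the other follows from the dimension count.

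For spanning one would use \emph{straightening}: the column-antisymmetry relations $U=-U'$ let us normalize every column tabloid so that its columns strictly increase from top to bottom (a tabloid with a repeated entry in a column is $0$ and is discarded); if a column-increasing tableau $U$ is not standard, then one of its rows fails to increase at some column $c$ with $1\le c\le\lambda_1-1$, and the relation $g_{U,c}=0$ in $\tilde{M}^{\lambda}/Q^{\lambda}$ rewrites $U$ as a $\pm$-combination of the tableaux $U_1,U_2$ (if $c\le\lambda_2$) or $U_3$ (if $c>\lambda_2$); with the total order on column-increasing tableaux chosen appropriately, the terms produced after re-normalizing columns are strictly smaller, and a descending induction shows every column tabloid is, modulo $Q^{\lambda}$, a $\mathbb{Z}$-combination of standard ones. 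Alternatively---and more in the spirit of the present paper---one proves independence directly by realizing $\tilde{M}^{\lambda}/Q^{\lambda}$ inside $\mathbb{K}[x_1,\dots,x_n]$: send a tableau $U$ to the Specht polynomial $\prod_{j=1}^{\lambda_2}(x_{U(1,j)}-x_{U(2,j)})$, the product of the differences over the two-box columns. This is well defined on $\tilde{M}^{\lambda}$ since each factor is alternating, and it annihilates the generators $g_{U,c}$ of $Q^{\lambda}$ (for $c>\lambda_2$ the two-box columns are unchanged, and for $c\le\lambda_2$ one uses the identity $(x_a-x_b)(x_p-x_q)=(x_p-x_b)(x_a-x_q)+(x_a-x_p)(x_b-x_q)$), so it factors through $\tilde{M}^{\lambda}/Q^{\lambda}$; the images of the standard tableaux are then linearly independent by a leading-monomial argument for a suitable term order. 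Either way, combined with the dimension count above, the standard cosets form a basis.

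The routine parts are the normalization and the small polynomial identity. The genuine technical point in the straightening route is the choice of order making the $g_{U,c}$ strictly decreasing---this is the well-known subtlety of the straightening algorithm, where a naive ``reduce at the first non-increasing row'' can cycle, so the order must be defined with care---while in the Specht-polynomial route it is the verification that distinct standard tableaux have distinct leading monomials under the chosen order. Neither is deep, but both must be set up carefully; in practice I would simply invoke \cite[Chapter 7]{F}.
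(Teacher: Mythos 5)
The paper itself gives no proof of this theorem: it is stated as a known classical result, and the reference \cite[Section 7.4]{F} (in particular Lemma 6 there, cited explicitly in Remark~\ref{Zlin}) is the intended source. Your proposal does the same thing---cite Fulton's book---so you take essentially the paper's approach; the sketch you add (dimension count reducing the problem to spanning, straightening via the $g_{U,c}$ relations for spanning, or alternatively Specht polynomials for independence) is a correct and standard outline of how that reference proves it, and you are right to flag the choice-of-order subtlety in the straightening argument as the genuinely delicate point.
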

\begin{rem}\label{Zlin}
	From the proof of \cite[Lemma 6, Section 7.4]{F} it follows that if $q+Q^{\lambda} \in \tilde{M}^{\lambda} / Q^{\lambda}$, where $q$ is a column tabloid, then $q+Q^{\lambda}$ may be written as a $\mathbb{Z}$-linear combination of the basis elements of the previous theorem.
\end{rem}
Another well known result we will need is that if $\lambda=(\lambda_1, \lambda_2)$ is a partition of $n$, then the dimension of the Specht module $S^{\lambda}$ is equal to \begin{equation}\label{dimsp}\dim(S^{\lambda})=\tbinom{n}{\lambda_2} -  \tbinom{n}{\lambda_{2} -1}.
	\end{equation} 
	
	The irreducible decomposition of the permutation module $M_k$ is given by Young's rule \cite[Corollary 1, Section 7.3]{F},\begin{equation}\label{Young}M_k=S^{(n)} \oplus S^{(n-1,1)} \oplus \cdots \oplus S^{(n-s,s)}, \ s=\min\{k,n-k\}.\end{equation}
\section{Total trades}
In this section we prove that the space generated by the set of $t$-$(n,k)$ total trades is an irreducible module of the symmetric group $\mathfrak{S}_n.$
\subsection{An identity} In this subsection we prove an identity that will be used in th proof of Theorem \ref{main0}.
\begin{defn}\label{def1}For a nonnegative integer $m$ and a subset $A$ of $X$ let \begin{equation}
		\Sigma_m(A):=\sum x_{i_1}\cdots x_{i_m} \in M_m
	\end{equation}
	where the sum is over all $m$-subsets $\{x_{i_1},\dots, x_{i_m} \}$ of $A$. 
	\end{defn}
	In the above definition, our convention is that for $m=0$ we have $\Sigma_0(A)=1$ and for $m>|A|$ we have $\Sigma_m(A)=0.$

\begin{lem}\label{1}Suppose $B$ is a subset of $X$ and $w$ is an element in $X$ such that $w \notin B$. Then for $m\ge1$ \[\Sigma_m(X\setminus B) = \Sigma_m(X\setminus(B\cup \{w\})) + w\Sigma_{m-1}(X\setminus(B\cup \{w\})).\]
\end{lem}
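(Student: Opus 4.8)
The plan is to prove the identity by a direct combinatorial argument, sorting the $m$-subsets of $X \setminus B$ according to whether or not they contain the distinguished element $w$. First I would observe that since $w \notin B$, we have $w \in X \setminus B$, so the set $X \setminus B$ decomposes as the disjoint union $(X \setminus (B \cup \{w\})) \sqcup \{w\}$. Every $m$-subset $S \subseteq X \setminus B$ then falls into exactly one of two classes: either $w \notin S$, in which case $S$ is an $m$-subset of $X \setminus (B \cup \{w\})$; or $w \in S$, in which case $S = \{w\} \cup S'$ where $S' = S \setminus \{w\}$ is an $(m-1)$-subset of $X \setminus (B \cup \{w\})$, and this correspondence $S \leftrightarrow S'$ is a bijection between the $m$-subsets of $X \setminus B$ containing $w$ and the $(m-1)$-subsets of $X \setminus (B \cup \{w\})$.

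Next I would translate this set-theoretic partition into the ring $2^{[X]}$. By Definition \ref{def1}, $\Sigma_m(X \setminus B)$ is the sum of the basis elements $x_{i_1} \cdots x_{i_m}$ over all $m$-subsets of $X \setminus B$. Splitting the sum along the two classes above: the contribution of the subsets not containing $w$ is exactly $\Sigma_m(X \setminus (B \cup \{w\}))$ by definition; and the contribution of the subsets containing $w$ is $\sum_{S'} w \cdot (\text{monomial of } S')$ where $S'$ ranges over $(m-1)$-subsets of $X \setminus (B \cup \{w\})$, which equals $w \, \Sigma_{m-1}(X \setminus (B \cup \{w\}))$ since multiplication by the singleton $w$ is just union of sets and distributes over the sum. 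Adding the two contributions gives the claimed identity.

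Finally I would check the boundary conventions so the statement holds for all $m \ge 1$ without case distinctions. If $m = 1$, the term $w\,\Sigma_0(X \setminus (B \cup \{w\})) = w \cdot 1 = w$, which is correct. If $m > |X \setminus B|$, both sides are zero: the left side by convention, and on the right $\Sigma_m(X \setminus (B \cup \{w\})) = 0$ since $m > |X \setminus (B\cup\{w\})|$, while $\Sigma_{m-1}(X \setminus (B \cup \{w\}))$ is either zero (if $m - 1 > |X \setminus (B \cup \{w\})|$) or else $m - 1 = |X \setminus (B \cup \{w\})|$ so $X \setminus B$ has exactly $m$ elements, contradicting $m > |X\setminus B|$. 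The argument is entirely elementary; there is essentially no obstacle, the only mild care needed being the handling of the degenerate cases $m=1$ and $m > |X \setminus B|$ so that the identity is literally true as stated rather than merely up to boundary terms.
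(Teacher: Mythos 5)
Your proof is correct and is exactly the elaboration of what the paper dismisses in one line as "follows immediately from Definition \ref{def1}": partitioning the $m$-subsets of $X\setminus B$ according to whether they contain $w$ and translating that into the ring $2^{[X]}$. The extra care you take with the boundary cases $m=1$ and $m>|X\setminus B|$ is sound but not needed beyond what the conventions already cover.
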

\begin{proof}
	This follows immediately from Definition \ref{def1}.
\end{proof}

We will need the identity described in the next lemma.
\begin{lem}\label{id}Suppose $A$ is a subset of $X$ and $x,y,z$ are distinct elements of $X$ such that $x,y,z \notin A$. Then for $m \ge 0$ we have the following identity in $M_m$,
	\begin{equation}\label{sigmaid}(x-y)\Sigma_m(X \setminus (A \cup \{x,y\}))-(z-y)\Sigma_m(X \setminus (A \cup \{z,y\}))-(x-z)\Sigma_m(X \setminus (A \cup \{x,z\}))=0.\end{equation}
\end{lem}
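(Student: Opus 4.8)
The plan is to expand everything in the monomial basis of $M_m$ using Lemma \ref{1} and check that all coefficients cancel. Write $C = X \setminus (A \cup \{x,y,z\})$, so that $X \setminus (A \cup \{x,y\}) = C \cup \{z\}$, and similarly for the other two sets. Applying Lemma \ref{1} (with $w=z$ and $B = A \cup \{x,y\}$) gives
\begin{equation*}\Sigma_m(X \setminus (A \cup \{x,y\})) = \Sigma_m(C) + z\,\Sigma_{m-1}(C),\end{equation*}
and analogously $\Sigma_m(X \setminus (A \cup \{z,y\})) = \Sigma_m(C) + x\,\Sigma_{m-1}(C)$ and $\Sigma_m(X \setminus (A \cup \{x,z\})) = \Sigma_m(C) + y\,\Sigma_{m-1}(C)$. (For $m=0$ one interprets $\Sigma_{-1}(C)=0$ and the identity reads $(x-y)-(z-y)-(x-z)=0$, which is clear; so assume $m\ge 1$.)

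Next I would substitute these three expressions into the left-hand side of \eqref{sigmaid}. The terms not involving $\Sigma_{m-1}(C)$ contribute $(x-y)\Sigma_m(C) - (z-y)\Sigma_m(C) - (x-z)\Sigma_m(C) = \big((x-y)-(z-y)-(x-z)\big)\Sigma_m(C)$, and the scalar-like combination $(x-y)-(z-y)-(x-z)$ vanishes identically in $2^{[X]}$ since $x,y,z$ appear with total coefficient zero — more precisely, expanding, $x - y - z + y - x + z = 0$. The remaining terms are
\begin{equation*}(x-y)z\,\Sigma_{m-1}(C) - (z-y)x\,\Sigma_{m-1}(C) - (x-z)y\,\Sigma_{m-1}(C),\end{equation*}
and here I would use that the product in $2^{[X]}$ is the "union" product, so $xz = zx$, $xy=yx$, $yz=zy$ as elements of $M_2$ (abusing notation for singletons), giving coefficient $(xz - yz) - (xz - xy) - (xy - yz) = xz - yz - xz + xy - xy + yz = 0$ multiplying $\Sigma_{m-1}(C)$. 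Hence the whole left-hand side is zero.

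The only genuine subtlety — and the step I would be most careful about — is the bookkeeping with the union product: one must note that $x \notin C$, $y \notin C$, $z \notin C$ (which holds because $x,y,z \notin A$ and they are excluded from $C$ by construction), so that multiplying $\Sigma_{m-1}(C)$ by $x$, $y$, or $z$ genuinely lands in $M_m$ and the monomials $x\,m'$, $y\,m'$, $z\,m'$ for $m'$ a monomial of $\Sigma_{m-1}(C)$ are honest $m$-subsets; and that $xz$, as a product of two distinct singletons neither of which lies in $C$, behaves symmetrically. There is no real obstacle: once the substitutions from Lemma \ref{1} are in place, the identity reduces to the two trivial cancellations $x-y-z+y-x+z=0$ and $xz-yz-xz+xy-xy+yz=0$. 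I would present the argument by collecting the $\Sigma_m(C)$-terms and the $\Sigma_{m-1}(C)$-terms separately and observing each collection has coefficient $0$ in $2^{[X]}$.
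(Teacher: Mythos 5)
Your proof is correct and follows essentially the same route as the paper's: both set up the common set $C$ of elements excluded, apply Lemma \ref{1} three times to reduce each $\Sigma_m$-term to $\Sigma_m(C) + w\,\Sigma_{m-1}(C)$ for the appropriate $w \in \{x,y,z\}$, and then verify that the two resulting scalar coefficients $(x-y)-(z-y)-(x-z)$ and $(x-y)z-(z-y)x-(x-z)y$ vanish. The only difference is cosmetic (you name the complement $C$ directly, while the paper names its complement), and your remark about the union product landing honestly in $M_m$ is a fair point though the paper leaves it implicit.
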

\begin{proof} The identity is clear for $m=0$. Suppose $m \ge 1$. Let $L$ be the left hand side of (\ref{sigmaid}) and let $C:=A\cup\{x,y,z\}$. From Lemma \ref{1} we have \begin{align*}
		\Sigma_m(X \setminus (A \cup \{x,y\}))=\Sigma_m(X \setminus C)+z\Sigma_{m-1}(X \setminus C),\\
		\Sigma_m(X \setminus (A \cup \{z,y\}))=\Sigma_m(X \setminus C)+x\Sigma_{m-1}(X \setminus C),\\
		\Sigma_m(X \setminus (A \cup \{x,z\}))=\Sigma_m(X \setminus C)+y\Sigma_{m-1}(X \setminus C).
	\end{align*}
Substituting in $L$ we see that  	
\[L=c_0\Sigma_m(X \setminus C) + c_1\Sigma_{m-1}(X \setminus C),	\]
where 
	$c_0=(x-y)-(z-y)-(x-z)$ and  
		$c_1=(x-y)z-(z-y)x-(x-z)y.$
	We have $c_0=c_1=0$ and thus $L=0$. \end{proof}

\subsection{Main result}
The main result of the present section is the following.

\begin{thm}\label{main0}
	Let $t< k $ and $t+k \le n$. Then as $\mathfrak{S}_n$-modules the  subspace $\langle \mathfrak{T}_{t,k,n}\rangle$ of $M_k$ spanned by the set $\mathfrak{T}_{t,k,n}$ of $t$-$(n,k)$ total trades is isomorphic to the Specht module $S^{(n-t-1,t+1)}$.
\end{thm}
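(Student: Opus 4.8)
The plan is to realise $\langle\mathfrak{T}_{t,k,n}\rangle$ as a nonzero $\mathfrak{S}_n$-equivariant quotient of the Specht module $S^{\lambda}$ with $\lambda=(n-t-1,t+1)$; since $S^{\lambda}$ is irreducible, this forces $\langle\mathfrak{T}_{t,k,n}\rangle\cong S^{\lambda}$. Using the presentation $S^{\lambda}\cong\tilde M^{\lambda}/Q^{\lambda}$ of Theorem \ref{pres}, I first define an $\mathfrak{S}_n$-equivariant linear map $\theta\colon\tilde M^{\lambda}\to M_k$ on tableaux. If $U$ is a Young tableau of shape $\lambda$ with first row $a_1,\dots,a_{n-t-1}$ and second row $b_1,\dots,b_{t+1}$, set
\[
\theta(U)=(a_1-b_1)\cdots(a_{t+1}-b_{t+1})\,\Sigma_{k-t-1}\bigl(\{a_{t+2},\dots,a_{n-t-1}\}\bigr)\in M_k .
\]
Since $U$ uses each of $1,\dots,n$ exactly once, $\{a_{t+2},\dots,a_{n-t-1}\}=X\setminus\{a_1,b_1,\dots,a_{t+1},b_{t+1}\}$, so $\theta(U)$ is a $t$-$(n,k)$ total trade, and every element of $\mathfrak{T}_{t,k,n}$ arises this way; hence $\Ima\theta=\langle\mathfrak{T}_{t,k,n}\rangle$. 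Transposing the two entries of a height-two column negates the corresponding factor $(a_i-b_i)$ (height-one columns have a single entry, so admit no such transposition), so $\theta$ respects the relations defining $\tilde M^{\lambda}$ and is well defined; it is visibly $\mathfrak{S}_n$-equivariant.

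The crux is to show $\theta(Q^{\lambda})=0$, i.e.\ $\theta(g_{U,c})=0$ for every tableau $U$ and every $1\le c\le n-t-2$; it then follows that $\theta$ descends to a surjection $\bar\theta\colon S^{\lambda}\twoheadrightarrow\langle\mathfrak{T}_{t,k,n}\rangle$. I would split according to the cases in the definition of $g_{U,c}$. If $c>t+1$, columns $c$ and $c+1$ have height one and $g_{U,c}=q-q_3$ only interchanges two of the arguments of $\Sigma_{k-t-1}$, so $\theta(g_{U,c})=0$ trivially. If $c\le t$, both columns have height two, the set of height-one entries of $U$ is untouched, and after cancelling the common factors $\theta(g_{U,c})=0$ reduces to the elementary identity $(a_c-b_c)(a_{c+1}-b_{c+1})=(a_{c+1}-b_c)(a_c-b_{c+1})+(a_c-a_{c+1})(b_c-b_{c+1})$ in the ring $2^{[X]}$. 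The remaining case $c=t+1$ is the essential one: a height-two column is then adjacent to a height-one column, the three tabloids in $g_{U,t+1}=q-q_1-q_2$ have genuinely different sets of height-one entries, and $\theta(g_{U,t+1})=0$ is exactly Lemma \ref{id} applied with $A=\{a_1,b_1,\dots,a_t,b_t\}$, $x=a_{t+1}$, $y=b_{t+1}$, $z=a_{t+2}$, after multiplying through by $(a_1-b_1)\cdots(a_t-b_t)$.

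Finally, $S^{\lambda}$ is irreducible, so $\bar\theta$ is either zero or injective; it is not zero, since $\langle\mathfrak{T}_{t,k,n}\rangle$ contains a nonzero total trade (in the expansion of one built from $2t+2$ distinct elements, every $k$-subset that occurs has coefficient $\pm1$, as distinct choices of the signs and of the symmetric-sum part give distinct $k$-subsets). Hence $\bar\theta$ is an isomorphism and $\langle\mathfrak{T}_{t,k,n}\rangle\cong S^{(n-t-1,t+1)}$, and in particular $\dim\langle\mathfrak{T}_{t,k,n}\rangle=\binom{n}{t+1}-\binom{n}{t}$. I expect the only genuine work to be the $c=t+1$ case of $\theta(Q^{\lambda})=0$ — carefully tracking which elements feed $\Sigma_{k-t-1}$ in each of $q,q_1,q_2$ — which is exactly why the identity was isolated beforehand as Lemma \ref{id}; everything else is formal.
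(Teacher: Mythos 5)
Your proposal follows the paper's proof essentially step by step: the same map from the column-tabloid presentation $\tilde M^{(n-t-1,t+1)}/Q^{(n-t-1,t+1)}$ to $\langle\mathfrak{T}_{t,k,n}\rangle$, the same three-case verification that $Q^{\lambda}$ is killed (elementary $2\times2$ identity for $c\le t$, Lemma~\ref{id} for $c=t+1$, symmetry for $c\ge t+2$), and the same appeal to irreducibility of the Specht module. The only addition is your explicit note that the induced map is nonzero because a total trade built from $2t+2$ distinct symbols expands with $\pm1$ coefficients, a small but welcome point the paper leaves implicit.
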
 
\begin{proof} We define a map  $h:\tilde{M}^{(n-t-1,t+1)} \to \langle \mathfrak{T}_{t,k,n}\rangle$ sending the column tabloid
\begin{center}
	$q:=$	\begin{tabular}{ |c|c|c| c|c|c| } 
			$x_1$ & $\cdots$ & $x_{t+1}$ & $z_{t+2}$ & $\cdots$ & $z_{n-t-1}$\\ 
			$y_1$ & $\cdots$ & $y_{t+1}$ \\ 
			\end{tabular}\end{center}
to the $t$-$(n,k)$ total trade 	
\[T:=(x_1-y_1)\cdots (x_{t+1}-y_{t+1})\sum x_{i_{t+2}}\cdots x_{i_k},\] where the sum is over all $(k-t-1)$-subsets $\{x_{i_{t+2}}, \dots, x_{i_k}\}$ of $X \setminus \{x_1,y_1,\dots, x_{t+1}, y_{t+1}\}$.	Since $T$ is skew symmetric with respect to $x_i,y_i$, where $i=1,\dots,t+1$, the map $h$ is well defined. Moreover it easily follows that $h$ is a surjective map of $\mathfrak{S}_n$-modules.

We will show that $h(Q^{(n-t-1,t+1)})=0$ where $Q^{(n-t-1,t+1)}$ is the subspace of $\tilde{M}^{(n-t-1,t+1)}$ given in Definition \ref{Q}.

As we saw in Section 2.1, every generator $g_{U,c}$ of $Q^{(n-t-1,t+1)}$ depends on a pair of consecutive columns $(c,c+1)$ of $q=q_{U}$. We distinguish three cases.

Case 1. Consider column $c$ of $q$, where $c \le t$. Let $q_1,q_2$ be the following column tabloids
\begin{align*} q_1&:=\begin{tabular}{ |c|c|c|c|c|c|c|c|c| } 
		$x_1$ & $\cdots$ & $x_{\bf{c+1}}$ & $x_{\bf{c}}$ & $\cdots$  & $x_{t+1}$ & $z_{t+2}$ & $\cdots$ & $z_{n-t-1}$\\ 
		$y_1$ & $\cdots$ & $y_{c}$ & $y_{c+1}$ & $\cdots$  & $y_{t+1}$ \\
\end{tabular}\ , \\
q_2&:=\begin{tabular}{ |c|c|c|c|c|c|c|c|c| } 
	$x_1$ & $\cdots$ & $x_{c}$ & $y_{\bf{c}}$ & $\cdots$  & $x_{t+1}$ & $z_{t+2}$ & $\cdots$ & $z_{n-t-1}$\\ 
	$y_1$ & $\cdots$ & $x_{\bf{c+1}}$ & $y_{c+1}$ & $\cdots$  & $y_{t+1}$ \\ 
\end{tabular}\ ,
\end{align*}
where $q_i$ is obtained by exchanging the top element of column $c+1$ of $q$ with the $i$th element of column $c$ of $q$, where $i=1,2$. The indices of the exchanged elements have been denoted with boldface characters. From the definition of $g_{U,c}$ we have $g_{U,c}=q-q_1-q_2$ and from the definition of $h$ we have $h(q-q_1-q_2)=T-T_1-T_2,$ where 
\begin{align*}
T_1&:=(x_1-y_1)\cdots (x_{c+1} - y_c)(x_c -y_{c+1}) \cdots (x_{t+1}-y_{t+1})\sum x_{i_{t+2}}\cdots x_{i_k}\\
T_2&:=(x_1-y_1)\cdots (x_c-x_{c+1})(y_c-y_{c+1}) \cdots (x_{t+1}-y_{t+1})\sum x_{i_{t+2}}\cdots x_{i_k},
\end{align*}
and both sums are as the sum in $T$. A quick computation yields \[ (x_c-y_c)(x_{c+1}-y_{c+1})-(x_{c+1} - y_c)(x_c -y_{c+1}) - (x_{c} - x_{c+1})(y_c -y_{c+1})=0.\]
Hence $T-T_1-T_2=0$. Thus we have $h(g_{U,c})=0$ as desired.

Case 2. Consider column $c=t+1$ of $q$. This is similar to the previous case but there is one extra step. Let $q_1,q_2$ be the following column tabloids
\begin{align*} q_1&:=\begin{tabular}{ |c|c|c|c|c|c|c| } 
		$x_1$ & $\cdots$  & $x_{{t}}$   & $z_{\bf{t+2}}$ & $x_{\bf{t+1}}$ & $\cdots$ & $z_{n-t-1}$\\ 
		$y_1$ & $\cdots$  & $y_{t}$ & $y_{t+1}$ \\
	\end{tabular}\ , \\
	 q_2&:=\begin{tabular}{ |c|c|c|c|c|c|c| } 
		$x_1$ & $\cdots$  & $x_{{t}}$   & $x_{t+1}$ & $y_{\bf{t+1}}$ & $\cdots$ & $z_{n-t-1}$\\ 
		$y_1$ & $\cdots$  & $y_{t}$ & $z_{\bf{t+2}}$ \\
	\end{tabular}\ ,
\end{align*}
where $q_i$ is obtained by exchanging the element $z_{{t+2}}$ of $q$ with the $i$th element of column $t+1$ of $q$, where $i=1,2$. 
We have $g_{U,c}=q-q_1-q_2$ and  $h(q-q_1-q_2)=T-T_1-T_2,$ where 
\begin{align*}
	T_1&:=(x_1-y_1)\cdots (x_{t} - y_t)(z_{t+2} -y_{t+1}) \sum x_{j_{t+2}}\cdots x_{j_k}\\
	T_2&:=(x_1-y_1)\cdots (x_{t} - y_t)(x_{t+1} -z_{t+2}) \sum x_{l_{t+2}}\cdots x_{l_k},
\end{align*} and the sums are as follows.
\begin{itemize}
	\item The sum  $S(1):=\sum x_{j_{t+2}}\cdots x_{j_k}$ in $T_1$ is over all $(k-t-1)$-subsets $\{x_{j_{t+2}}, \dots, x_{j_k}\}$ of $X \setminus \{x_1,y_1,\dots, x_{t}, y_{t}, z_{t+2}, y_{t+1}\}$.  
	\item the sum $S(2) :=\sum x_{j_{t+2}}\cdots x_{j_k}$ in $T_2$ is over all $(k-t-1)$-subsets $\{x_{l_{t+2}}, \dots, x_{l_k}\}$ of $X \setminus \{x_1,y_1,\dots, x_{t}, y_{t}, x_{t+1}, z_{t+2}\}$.
	\end{itemize}
Let $S:=\sum x_{i_{t+2}}\cdots x_{i_k}$ be the sum in $T$. So this sum is over all $(k-t-1)$-subsets $\{x_{i_{t+2}}, \dots, x_{i_k}\}$ of $X \setminus \{x_1,y_1,\dots, x_{t+1}, y_{t+1}\}$. With the notation of Lemma \ref{id} we have \begin{align*}S&=\Sigma_{k-t-1}(X  \setminus (A \cup \{x_{t+1},y_{t+1}\})\\
S(1)&=\Sigma_{k-t-1}(X  \setminus (A \cup \{z_{t+2},y_{t+1}\})\\
S(2)&=\Sigma_{k-t-1}(X  \setminus (A \cup \{x_{t+1},z_{t+2}\})
\end{align*}
where $A=\{x_1,y_1, \dots, x_t,y_t\}$. By Lemma \ref{id} we have \[ (x_{t+1}-y_{t+1})S-(z_{t+2} - y_{t+1})S(1) - (x_{t+1} - z_{t+2})S(2)=0.\]
Hence $T-T_1-T_2=0$. Thus $h(g_{U,c})=0$ as desired.

Case 3. Consider column $c$ of $q$, where $c \ge t+2.$ Let $q_1$ be the column tabloid obtained from $q$ by exchanging the elements $z_c$ and $z_{c+1}$. Then $g_{U,c}=q-q_1$. Since $c \ge t+2$, it follows from the definition of the total trade $T$ in the first paragraph of the proof, that $h(q)=h(q_1)=T$. Hence $h(g_{U,c})=0$ as desired.

From the three cases considered above, it follows that every generator $g_{U,c}$ of the module $Q^{(n-t-1,t+1)}$ is mapped to zero under $h:\tilde{M}^{(n-t-1,t+1)} \to \langle \mathfrak{T}_{t,k,n}\rangle$. Thus $h(Q^{(n-t-1,t+1)})=0$. From Theorem \ref{pres} we have that the map $h$ induces a map of $\mathfrak{S}_n$-modules \[\bar{h}:S^{(n-t-1,t+1)} \to \langle \mathfrak{T}_{t,k,n}\rangle.\] Since the map $h$ is surjective, the map $\bar{h}$ is surjective. Now since the $\mathfrak{S}_n$-module $S^{(n-t-1,t+1)}$ is irreducible and $\langle \mathfrak{T}_{t,k,n}\rangle$ is nonzero , the map $\bar{h}$ is an isomorphism.\end{proof}
\subsection{Corollaries} We will consider some corollaries of Theorem \ref{main0} and discuss  briefly how these relate to earlier work on trades.

There is interest in obtaining bases of the $\mathbb{Z}$-module spanned by all $t$-$(n,k)$ trades, for example see \cite{GJ,GLL,KM,KTR}. The next result gives a basis of the $\mathbb{Z}$-module spanned by the the $t$-$(n,k)$ total trades.
  \begin{cor}\label{basis}
 	Let $t< k $ and $t+k \le n$. Let us fix a total order on the set $X$. Then a basis of the vector space $\langle \mathfrak{T}_{t,k,n}\rangle$ is the set of $t$-$(n,k)$ total trades \[T=(x_1-y_1)\cdots (x_{t+1}-y_{t+1})\sum x_{i_{t+2}}\cdots x_{i_k} \in M_{k}\] that satisfy the conditions
 	\begin{itemize}
 		\item $x_1 < x_2 < \cdots < x_{t+1}$,
 		\item $y_1 < y_2 < \cdots < y_{t+1}$, and
 		\item $x_i < y_i$, $i=1,\dots, t+1$.
 \end{itemize}
Moreover, the above $t$-$(n,k)$ total trades form a basis of the $\mathbb{Z}$-module spanned by $\mathfrak{T}_{t,k,n}$. \end{cor}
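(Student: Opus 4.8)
The plan is to transport the standard basis theorem (Theorem \ref{stb}) through the isomorphism $\bar h\colon S^{(n-t-1,t+1)}\xrightarrow{\ \sim\ }\langle\mathfrak T_{t,k,n}\rangle$ constructed in the proof of Theorem \ref{main0}. First I would note that a standard Young tableau of shape $(n-t-1,t+1)$ is determined by its first column $a_1<a_2<\cdots<a_{t+1}$ (the top $t+1$ entries) together with the remaining entry of the first column $a_{t+2}$ and the entries $b_1<b_2<\cdots<b_{t+1}$ of the second column, subject to $a_i<b_i$ for $i=1,\dots,t+1$ (row-increasing), while the remaining entries $a_{t+2}<\cdots<a_{n-t-1}$ of the first row are forced. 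Writing the column tabloid $q$ as in the proof of Theorem \ref{main0} with $x_i=a_i$, $y_i=b_i$ and $z_j=a_j$, the map $h$ sends $q$ to precisely the total trade $T=(x_1-y_1)\cdots(x_{t+1}-y_{t+1})\sum x_{i_{t+2}}\cdots x_{i_k}$ with $x_1<\cdots<x_{t+1}$, $y_1<\cdots<y_{t+1}$ and $x_i<y_i$; and conversely every total trade satisfying the three bulleted conditions arises this way from exactly one standard tableau. Hence $\bar h$ carries the standard basis of $S^{(n-t-1,t+1)}$ bijectively onto the listed set of total trades, which is therefore a $\mathbb K$-basis of $\langle\mathfrak T_{t,k,n}\rangle$; in particular its cardinality equals $\binom{n}{t+1}-\binom{n}{t}=\dim S^{(n-t-1,t+1)}$, consistent with \eqref{dimsp}.

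For the integral statement, I would work with the $\mathbb Z$-span $\langle\mathfrak T_{t,k,n}\rangle_{\mathbb Z}$ of all $t$-$(n,k)$ total trades inside the free $\mathbb Z$-module $M_k$ with basis the $k$-subsets of $X$. The listed total trades lie in this $\mathbb Z$-module, so it suffices to show every total trade $T'$ is a $\mathbb Z$-linear combination of them. Given an arbitrary total trade, it is $h(q')$ for some column tabloid $q'$ of shape $(n-t-1,t+1)$ (not necessarily standard, and with possible sign changes coming from reordering columns). By Remark \ref{Zlin}, the coset $q'+Q^{(n-t-1,t+1)}$ is a $\mathbb Z$-linear combination of the standard cosets; applying $\bar h$ and using $h(Q^{(n-t-1,t+1)})=0$, we get that $T'=h(q')$ is the corresponding $\mathbb Z$-linear combination of the listed total trades. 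Since these are also $\mathbb K$-linearly independent, they form a $\mathbb Z$-basis of $\langle\mathfrak T_{t,k,n}\rangle_{\mathbb Z}$.

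The one point requiring care — and the main obstacle — is the bookkeeping identifying which Young tableaux $U$ produce the total trades in the stated normalized form, and checking that the correspondence tableau $\mapsto$ total trade is genuinely a bijection onto the listed set: a priori two different standard tableaux could map to the same total trade, or a listed total trade could fail to be in the image of a \emph{standard} tableau. Both are ruled out because the first column of a standard shape-$(n-t-1,t+1)$ tableau being increasing forces $x_i<y_i$ and $x_1<\cdots<x_{t+1}$, the second column increasing forces $y_1<\cdots<y_{t+1}$, and these conditions conversely pin down the tableau uniquely; I would also remark that the skew-symmetry of $T$ in each pair $\{x_i,y_i\}$ together with the symmetry of $h$ in the $z$'s means the normalization $x_i<y_i$ and the orderings are exactly the right rigidification. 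Everything else is a direct application of Theorems \ref{stb} and \ref{main0} and Remark \ref{Zlin}.
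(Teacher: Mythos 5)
Your structural approach --- transporting the standard basis of $S^{(n-t-1,t+1)}$ (Theorem \ref{stb}) through the isomorphism $\bar h$ from Theorem \ref{main0}, and invoking Remark \ref{Zlin} for the integral statement --- is exactly what the paper's terse proof intends, and these parts of your argument are sound. But there is a genuine gap in the bijection step. You claim that ``conversely every total trade satisfying the three bulleted conditions arises this way from exactly one standard tableau.'' This is false. A standard Young tableau of shape $(n-t-1,t+1)$ requires the \emph{entire} first row to be increasing; besides $x_1<\cdots<x_{t+1}$ it also forces $x_{t+1}<z_{t+2}$, i.e.\ $x_{t+1}$ must be smaller than every element of $X\setminus\{x_1,y_1,\dots,x_{t+1},y_{t+1}\}$. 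The three bulleted conditions do not imply this. Concretely, take $n=4$, $t=0$, $k=2$: the pair $(x_1,y_1)=(2,3)$ satisfies $x_1<y_1$, yet the only candidate tableau has first row $2,1,4$, which is not increasing, so no standard tableau produces $T_{2,3}$.

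Your cardinality check is also not correct: the three conditions describe a standard Young tableau of rectangular shape $(t+1,t+1)$ sitting on an arbitrary $(2t+2)$-subset of $X$, so the listed set has $\binom{n}{2t+2}C_{t+1}$ elements (with $C_{t+1}$ the Catalan number), which exceeds $\dim S^{(n-t-1,t+1)}=\binom{n}{t+1}-\binom{n}{t}$ whenever $n>2t+2$. In the $n=4$, $t=0$, $k=2$ example there are $6$ listed trades in a $3$-dimensional space, and Lemma \ref{id} (with $m=1$, $A=\emptyset$) produces the explicit linear relation $T_{1,2}-T_{1,3}+T_{2,3}=0$; so the listed set is linearly dependent and cannot be a basis. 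What $\bar h$ actually carries the standard basis onto is the set of total trades satisfying the three bulleted conditions \emph{together with} the further constraint $x_{t+1}<\min\big(X\setminus\{x_1,y_1,\dots,x_{t+1},y_{t+1}\}\big)$. When $n=2t+2$ this extra constraint is vacuous and $\binom{n}{t+1}-\binom{n}{t}=C_{t+1}$, which is why the two descriptions coincide there, but for $n>2t+2$ they do not; you should flag this discrepancy rather than assert the bijection.
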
 
 \begin{proof}We saw in the proof of Theorem \ref{main0} that the map  $h:\tilde{M}^{(n-t-1,t+1)} \to \langle \mathfrak{T}_{t,k,n}\rangle$ sending the column tabloid
 	\begin{center}
 			\begin{tabular}{ |c|c|c| c|c|c| } 
 			$x_1$ & $\cdots$ & $x_{t+1}$ & $z_{t+2}$ & $\cdots$ & $z_{n-t-1}$\\ 
 			$y_1$ & $\cdots$ & $y_{t+1}$ \\ 
 	\end{tabular}\end{center}
 	to the $t$-$(n,k)$ total trade $T$ induces an isomorphism $\tilde{M}^{(n-t-1,t+1)}/Q^{(n-t-1,t+1)} \to \langle \mathfrak{T}_{t,k,n}\rangle$. The desired result follows from Theorem \ref{main0}.
 	
 	The second claim of the corollary follows from Remark \ref{Zlin}.\end{proof} We remark that the basis elements of the previous corollary resemble Specht polynomials \cite{Pe}, except for the `tail part' $\sum x_{i_{t+2}}\cdots x_{i_k}$ of $T$. 

It was shown in \cite[Theorem 11]{GKK} that the vector space of $t$-$(n,k)$ trades decomposes as the direct sum $\langle \mathfrak{T}_{t,k,n}\rangle \oplus \cdots \oplus \langle \mathfrak{T}_{k-1,k,n}\rangle$ of subspaces of total trades. We recover this result from Theorem \ref{main0} as the irreducible decomposition of the space of $t$-$(n,k)$ trades, i.e. as the irreducible decomposition of the kernel of the inclusion matrix $W_{t,k}$.
\begin{cor}[\cite{GKK}]\label{cor2} Let $t< k \le n/2$. Then vector space of $t$-$(n,k)$ trades decomposes as the direct sum $\langle \mathfrak{T}_{t,k,n}\rangle \oplus \cdots \oplus \langle \mathfrak{T}_{k-1,k,n}\rangle$.\footnote{In \cite[Theorem 11]{GKK} the result is stated without assumptions on $k,n$. However, in its proof \cite[Theorem 9]{GKK} is used which has such assumptions. These yield $2k-1 \le n$.}
\end{cor}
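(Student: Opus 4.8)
The plan is to identify the kernel of the inclusion matrix $W_{t,k}$ — equivalently, the kernel of the $\mathfrak{S}_n$-map $\psi_k^{(k-t)}: M_k \to M_t$ — as an $\mathfrak{S}_n$-module, and then match its irreducible constituents against the list of $\langle \mathfrak{T}_{i,k,n}\rangle$ for $t \le i \le k-1$ supplied by Theorem \ref{main0}. First I would record that $\psi_k^{(k-t)}$ is nonzero on each Specht constituent of $M_k$: concretely, the composite $\psi_t^{(t)} \circ \cdots$ of down operators, or the known fact that each $\psi_k^{(k-t)}$ restricted to $S^{(n-s,s)}$ (for $s \le t$) is an isomorphism onto the copy of $S^{(n-s,s)}$ in $M_t$, while on $S^{(n-s,s)}$ for $s > t$ there is no matching constituent in $M_t$ (by Young's rule \eqref{Young}, $M_t$ only contains $S^{(n-s,s)}$ for $s \le t$), so those constituents must lie in the kernel. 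Since each $S^{(n-s,s)}$ occurs with multiplicity one in $M_k$, Schur's lemma forces $\ker \psi_k^{(k-t)} = \bigoplus_{s=t+1}^{\min\{k,n-k\}} S^{(n-s,s)}$, and the hypothesis $k \le n/2$ makes the upper limit equal to $k$.

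Next I would invoke Theorem \ref{main0}: for each $i$ with $t \le i \le k-1$ and $i+k \le n$ (the latter holds since $i \le k-1 < k \le n/2$, so $i + k < 2k \le n$), we have $\langle \mathfrak{T}_{i,k,n}\rangle \cong S^{(n-i-1,i+1)}$ as $\mathfrak{S}_n$-modules. As $i$ runs over $\{t, t+1, \dots, k-1\}$, the second part $i+1$ runs over $\{t+1, t+2, \dots, k\}$, so the collection $\{\langle \mathfrak{T}_{i,k,n}\rangle : t \le i \le k-1\}$ realizes exactly the constituents $S^{(n-s,s)}$ with $t+1 \le s \le k$ — precisely the summands found in the previous paragraph for $\ker \psi_k^{(k-t)}$.

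It remains to upgrade the abstract isomorphism of $\mathfrak{S}_n$-modules into an internal direct sum decomposition inside $M_k$. Each $\langle \mathfrak{T}_{i,k,n}\rangle$ is a submodule of $M_k$ that is irreducible, isomorphic to $S^{(n-i-1,i+1)}$, and is annihilated by $\psi_k^{(k-i)}$ — hence, a fortiori (by factoring $\psi_k^{(k-t)}$ through $\psi_k^{(k-i)}$ when $i > t$, or directly when $i=t$), it is also annihilated by $\psi_k^{(k-t)}$ whenever... actually I would be careful here: the cleanest argument is that $\langle \mathfrak{T}_{i,k,n}\rangle \subseteq \ker \psi_k^{(k-i)}$, and one checks directly that a total trade is a trade, i.e. killed by $W_{t,k}$, for every $t \le i$; alternatively, since $\langle \mathfrak{T}_{i,k,n}\rangle$ is an irreducible submodule of $M_k$ isomorphic to $S^{(n-i-1,i+1)}$ with $i+1 \ge t+1 > t$, and $\ker \psi_k^{(k-t)}$ is the sum of all constituents $S^{(n-s,s)}$ with $s > t$, uniqueness of the isotypic decomposition places $\langle \mathfrak{T}_{i,k,n}\rangle$ inside $\ker \psi_k^{(k-t)}$. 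Because the constituents $S^{(n-s,s)}$ for $t+1 \le s \le k$ are pairwise non-isomorphic and each occurs with multiplicity one in $M_k$, the sum $\langle \mathfrak{T}_{t,k,n}\rangle + \cdots + \langle \mathfrak{T}_{k-1,k,n}\rangle$ is direct and equals $\ker \psi_k^{(k-t)}$, which is the space of $t$-$(n,k)$ trades. The main obstacle is the bookkeeping in this last paragraph: making rigorous that a $t$-$(n,k)$ total trade is indeed a $t$-$(n,k)$ trade (so that $\langle \mathfrak{T}_{i,k,n}\rangle$ really sits inside the kernel of $W_{t,k}$), and carefully tracking the multiplicity-one property that lets the abstract sum-of-constituents statement become a genuine internal direct sum — but both are routine given Young's rule and Theorem \ref{main0}.
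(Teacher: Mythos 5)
Your proposal is correct and follows essentially the same route as the paper: identify $\ker W_{t,k}$ via Young's rule and the maximal rank of $W_{t,k}$, then match the surviving constituents $S^{(n-s,s)}$, $t+1\le s\le k$, to $\langle\mathfrak{T}_{i,k,n}\rangle$ via Theorem \ref{main0} and multiplicity one. The only place the paper is crisper is the maximal-rank step, where it simply cites Gottlieb \cite{Go} rather than gesturing at down-operators being nonzero, and it treats the internal-direct-sum bookkeeping (which you spell out at some length) as immediate from multiplicity one.
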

\begin{proof}
We know that the matrix of the linear map $\psi_k^{(k-t)}: M_k \to M_t$ with respect to some ordering of the basis of $k$-subsets and $t$-subsets of $X$ is the inclusion matrix $W_{t,k}$. From Young's rule (\ref{Young}), the irreducible decompositions of the $\mathfrak{S}_k$-modules $M_k$ and $M_t$ are 	$M_k = S^{(n-k,k)} \oplus S^{(n-k+1,k-1)} \oplus \cdots \oplus S^{(n)}$ and 
$M_t = S^{(n-t,t)} \oplus S^{(n-t+1,t-1)} \oplus \cdots \oplus S^{(n)}$. Since the matrix $W_{t,k}$ has maximal rank \cite{Go}, it follows that the irreducible decomposition of the kernel of $\psi_k^{(k-t)}$ is \begin{equation}\label{irrker}\ker (\psi_k^{(k-t)}) = S^{(n-k,k)} \oplus S^{(n-k+1,k-1)} \oplus \cdots \oplus S^{(n-t-1,t+1)}.\end{equation} From Theorem \ref{main0} we have $S^{(n-i-1,i+1)} \cong  \langle \mathfrak{T}_{i,k,n}\rangle$ for all $i=t, \dots, k-1$. (The hypothesis $k \le n/2$ is needed in order to apply Theorem \ref{main0} for $t$ replaced by $k-1$. Also it is needed so that $(n-k,k)$ is a partition.)\end{proof}
 For a $t$-$(n,k)$ trade $T$ we denote by $\Pi(T)$ the orbit of $T$ under the action of the symmetric group $\mathfrak{S}_n$. The following decomposition result was proved in \cite[Theorem 2]{GKK}.
\begin{cor}[{\cite{GKK}}]Let $t< k \le n/2$ and let $T$ be a $t$-$(n,k)$ trade.Then \[\langle \Pi(T) \rangle =\bigoplus_{i\in I}\langle \mathfrak{T}_{i,k,n}\rangle,\] where $I \subseteq \{t,\dots, k-1\}$ consists of integers $i$ such that there is an $i$-trade in $\langle \Pi(T) \rangle$ which is not an $(i+1)$-trade. In particular, $\langle \Pi(T) \rangle$ is the whole vector space of $t$-$(n,k)$ trades if and only if $I=\{t,\dots, k-1\}$.\end{cor}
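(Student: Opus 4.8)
The plan is to deduce this formally from Theorem~\ref{main0}, Corollary~\ref{cor2}, and the fact that the $\mathfrak{S}_n$-modules in play are multiplicity free. For $t\le i\le k-1$ write $K_i:=\ker\!\big(\psi_k^{(k-i)}\colon M_k\to M_i\big)$ for the space of $i$-$(n,k)$ trades. First I would establish that $K_i=\bigoplus_{j=i}^{k-1}\langle\mathfrak{T}_{j,k,n}\rangle$ as $\mathfrak{S}_n$-submodules of $M_k$, for every $i\in\{t,\dots,k-1\}$. Since $k\le n/2$, Young's rule~(\ref{Young}) gives $M_k=\bigoplus_{s=0}^{k}S^{(n-s,s)}$ and $M_i=\bigoplus_{s=0}^{i}S^{(n-s,s)}$, both multiplicity free. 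By the maximal rank of the inclusion matrix $W_{i,k}$~\cite{Go} (applicable since $i+k\le 2k-1\le n$) the map $\psi_k^{(k-i)}$ is surjective, so in the multiplicity-free module $M_k$ its kernel is the complementary sum of isotypic components, $K_i=\bigoplus_{s=i+1}^{k}S^{(n-s,s)}$. On the other hand Theorem~\ref{main0} shows each $\langle\mathfrak{T}_{j,k,n}\rangle$ is an irreducible submodule of $M_k$ isomorphic to $S^{(n-j-1,j+1)}$, hence equals the unique $S^{(n-j-1,j+1)}$-isotypic summand of $M_k$; re-indexing $s=j+1$ gives the claimed identity. In particular $K_t=\bigoplus_{j=t}^{k-1}\langle\mathfrak{T}_{j,k,n}\rangle$, recovering~(\ref{irrker}), the $K_i$ form a descending chain, and every $(i+1)$-trade is an $i$-trade.

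Next, the orbit span $\langle\Pi(T)\rangle$ is an $\mathfrak{S}_n$-submodule of $M_k$ lying inside $K_t$ (because $T\in K_t$ and $K_t$ is $\mathfrak{S}_n$-stable), so multiplicity-freeness of $K_t$ yields a unique $J\subseteq\{t,\dots,k-1\}$ with $\langle\Pi(T)\rangle=\bigoplus_{j\in J}\langle\mathfrak{T}_{j,k,n}\rangle$; it remains to check $J=I$. If $j\in J$ then $\langle\mathfrak{T}_{j,k,n}\rangle\subseteq\langle\Pi(T)\rangle$, and any nonzero $v$ in it lies in $K_j$ but not in $K_{j+1}$ (the summand $\langle\mathfrak{T}_{j,k,n}\rangle$ is exactly what distinguishes $K_j$ from $K_{j+1}$), so $v$ is a $j$-trade in $\langle\Pi(T)\rangle$ that is not a $(j+1)$-trade, whence $j\in I$. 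Conversely, given $i\in I$, pick $v\in\langle\Pi(T)\rangle\cap K_i$ with $v\notin K_{i+1}$ and write $v=\sum_{j=t}^{k-1}v_j$ with $v_j\in\langle\mathfrak{T}_{j,k,n}\rangle$: membership in $\langle\Pi(T)\rangle$ forces $v_j=0$ for $j\notin J$, membership in $K_i$ forces $v_j=0$ for $j<i$, and $v\notin K_{i+1}$ forces $v_j\ne0$ for some $j\le i$; hence $v_i\ne0$, and since a nonzero $v_i$ with $i\notin J$ is impossible, $i\in J$. Finally $\langle\Pi(T)\rangle=K_t$ if and only if $J=\{t,\dots,k-1\}$, which is the last assertion.

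I do not expect a genuine obstacle; the one place needing care is the first step — verifying that each $\langle\mathfrak{T}_{i,k,n}\rangle$ is \emph{literally} the corresponding isotypic summand of $M_k$ (not merely abstractly isomorphic to it) and that the trade spaces $K_i$ decompose compatibly as sub-sums of that fixed decomposition of $M_k$. Once those identifications are made, the rest is routine linear algebra inside a multiplicity-free module.
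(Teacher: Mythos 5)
Your proposal is correct and takes essentially the same route as the paper: decompose the multiplicity-free module $M_k$ (and the kernels $K_i$) via Corollary~\ref{cor2}, identify $\langle\Pi(T)\rangle$ as a sub-sum of the $\langle\mathfrak{T}_{j,k,n}\rangle$, and match the index set with the stated $I$ using Theorem~\ref{main0}. You are somewhat more explicit than the paper on the two-sided check that the decomposition index set $J$ coincides with $I$ (the paper only spells out $J\subseteq I$), but there is no substantive difference in method.
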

\begin{proof} From Corollary \ref{cor2} it follows that the irreducible decomposition of the $\mathfrak{S}_n$-module $\langle \Pi(T) \rangle$ is of the form $\bigoplus_{i\in I}\langle \mathfrak{T}_{i,k,n}\rangle$, where $I$ is a nonempty subset of $\{t, \dots, k-1\}.$ For every $i \in I$, the set $\mathfrak{T}_{i,k,n}$ consists of $i$-$(n,k)$ trades. These are not $(i+1)$-$(n,k)$ trades; indeed,  by Theorem \ref{main0} we have $\langle \mathfrak{T}_{i,k,n}\rangle =S^{(n-i-1,i+1)}$ and this Specht module is not contained in the kernel of the map $\psi_k^{(k-i-1)} :M_{k} \to M_{i+1}$ according to (\ref{irrker}) for $i+1$ in place of $t$.\end{proof}
A classical result of Granert and Jurkat states that the $\mathbb{Z}$-module spanned by all $t$-$(n,k)$ trades is spanned by the permutations of any nonzero minimal $t$-$(n,k)$ trade. As an immediate corollary of Theorem \ref{main0} we recover the vector space version of this result.
\begin{cor}[\cite{GJ}] Let $t< k \le n/2$. The vector space of $t$-$(n,k)$ trades is spanned by the permutations of any nonzero minimal $t$-$(n,k)$ trade.
\end{cor}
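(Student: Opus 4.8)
The plan is to deduce this from Theorem \ref{main0} together with the classical fact (used already in the proof of Corollary \ref{cor2}) that the vector space of $t$-$(n,k)$ trades is $\ker(\psi_k^{(k-t)})$ with irreducible decomposition \eqref{irrker}, namely $S^{(n-k,k)} \oplus \cdots \oplus S^{(n-t-1,t+1)}$, a sum of pairwise non-isomorphic irreducibles each of multiplicity one. The key observation is that a minimal $t$-$(n,k)$ trade $T_0 = (x_1-y_1)\cdots(x_{t+1}-y_{t+1})x_{t+2}\cdots x_k$ is indeed a trade (it lies in $\ker(\psi_k^{(k-t)})$, since it is skew in each pair $x_i,y_i$ while $\psi_k^{(k-t)}$ is symmetric), so $\langle \Pi(T_0)\rangle$ is an $\mathfrak{S}_n$-submodule of the trade space. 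By the multiplicity-one decomposition, $\langle \Pi(T_0)\rangle$ is the direct sum of some subset of the summands $S^{(n-i-1,i+1)}$, $i=t,\dots,k-1$.

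First I would show that the summand $S^{(n-t-1,t+1)}$ actually occurs in $\langle \Pi(T_0)\rangle$; this is the only step requiring any real argument. The cleanest route is to project: let $\pi$ denote the $\mathfrak{S}_n$-equivariant projection of the trade space onto its isotypic component $S^{(n-t-1,t+1)} \cong \langle \mathfrak{T}_{t,k,n}\rangle$ (Theorem \ref{main0}). It suffices to exhibit one permutation $\sigma$ with $\pi(\sigma T_0)\neq 0$, equivalently to show $T_0$ has nonzero component in $\langle \mathfrak{T}_{t,k,n}\rangle$. I would argue this by ``averaging the tail'': summing $\sigma T_0$ over the subgroup of $\mathfrak{S}_n$ fixing $\{x_1,y_1,\dots,x_{t+1},y_{t+1}\}$ pointwise and permuting the remaining $n-2t-2$ letters produces (up to the nonzero scalar $(k-t-1)!\,(n-2t-2-(k-t-1))!$) exactly the total trade $(x_1-y_1)\cdots(x_{t+1}-y_{t+1})\Sigma_{k-t-1}(X\setminus\{x_1,y_1,\dots,x_{t+1},y_{t+1}\})$, which is a nonzero element of $\langle\mathfrak{T}_{t,k,n}\rangle$. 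Hence $\langle \Pi(T_0)\rangle$ contains a nonzero element of the irreducible $\langle\mathfrak{T}_{t,k,n}\rangle = S^{(n-t-1,t+1)}$, so it contains that whole summand.

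Next, to get \emph{all} summands, I would run the same averaging idea ``one pair at a time'': averaging $\sigma T_0$ over the subgroup permuting only $\{x_{t+1},y_{t+1},x_{t+2},\dots,x_k\}$ (say) yields, up to a nonzero scalar, a minimal $(t{+}1)$-$(n,k)$-type object, and iterating shows $\langle\Pi(T_0)\rangle$ maps onto each isotypic component in turn; alternatively, and more slickly, I would apply the already-proved Corollary (the one stated immediately above in the $\langle\Pi(T)\rangle$ result): for $i=t,\dots,k-1$, the averaging above produces an element of $\langle\mathfrak{T}_{i,k,n}\rangle\subseteq\langle\Pi(T_0)\rangle$ which is an $i$-trade but not an $(i+1)$-trade (it is not killed by $\psi_k^{(k-i-1)}$, by \eqref{irrker} applied with $i+1$ in place of $t$), so every $i\in\{t,\dots,k-1\}$ lies in the index set $I$, forcing $\langle\Pi(T_0)\rangle$ to equal the full trade space $\ker(\psi_k^{(k-t)})$.

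The main obstacle is the non-vanishing in the second paragraph: one must be sure that the averaged element is genuinely nonzero in $M_k$ and genuinely lands in (not merely near) the $S^{(n-t-1,t+1)}$-summand. The first point is a short bookkeeping check on multiplicities of $k$-subsets; the second is automatic once one recognizes the averaged element as literally a total trade, since $\langle\mathfrak{T}_{t,k,n}\rangle$ is by Theorem \ref{main0} precisely the $S^{(n-t-1,t+1)}$-isotypic part of the trade space, which by multiplicity-one is intersected trivially by every other summand. Everything else is formal consequence of the multiplicity-one decomposition \eqref{irrker} and Schur's lemma.
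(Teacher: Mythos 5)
Your first paragraph is correct: $T_0\in\ker\psi_k^{(k-t)}$, the trade space is multiplicity-free by \eqref{irrker}, and summing $\sigma T_0$ over the pointwise stabilizer of $\{x_1,y_1,\dots,x_{t+1},y_{t+1}\}$ does produce a nonzero scalar multiple of a $t$-$(n,k)$ total trade, so $S^{(n-t-1,t+1)}\subseteq\langle\Pi(T_0)\rangle$. However, your second paragraph has a real gap. Averaging $\sigma T_0$ over the subgroup permuting $\{x_{t+1},y_{t+1},x_{t+2},\dots,x_k\}$ does \emph{not} yield a minimal $(t+1)$-type element: since $T_0$ is antisymmetric under the transposition $(x_{t+1}\,y_{t+1})$, which lies in that subgroup, the symmetrization is identically zero. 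So neither the direct route nor the ``alternative'' via the preceding $\langle\Pi(T)\rangle$-decomposition corollary works as written, because you never actually exhibit, for $i>t$, an $i$-trade in $\langle\Pi(T_0)\rangle$ that is not an $(i+1)$-trade. The fix is to take \emph{differences} rather than averages: if $y_{t+2}\notin\supp(T_0)$ (which exists since $2k\le n$) and $\sigma=(x_{t+2}\,y_{t+2})$, then $T_0-\sigma T_0=(x_1-y_1)\cdots(x_{t+2}-y_{t+2})x_{t+3}\cdots x_k$ is a minimal $(t+1)$-trade in $\langle\Pi(T_0)\rangle$; iterating produces a minimal $i$-trade for every $i\le k-1$, and then your first-paragraph averaging applied to it shows each $\langle\mathfrak{T}_{i,k,n}\rangle$ occurs.

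It is also worth noting that your route, even once repaired, is considerably longer than the paper's. The paper simply expands the factors $(x_j-y_j)$ for $j>t+1$ to see that every total trade of every level $i\in\{t,\dots,k-1\}$ is a $\mathbb{Z}$-linear combination of minimal $t$-$(n,k)$ trades; by Corollary~\ref{cor2} these total trades span the whole trade space, so the minimal $t$-trades do too, and transitivity of the $\mathfrak{S}_n$-action on minimal $t$-trades finishes immediately. No projection, averaging, or invocation of Schur's lemma is needed.
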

\begin{proof} Since every total trade is a sum of minimal $t$-$(n,k)$ trades, we see from Corollary \ref{cor2} that the vector space of $t$-$(n,k)$ trades is spanned by the minimal $t$-$(n,k)$ trades. From this and the observation that the symmetric group $\mathfrak{S}_n$ acts transitively on minimal trades the result follows. \end{proof}	

\section{Representations of the general linear group}
The purpose of this section is to recall basis results from the polynomial representation theory of the general linear group that will be needed in the next section. Our main reference here is the book by Weyman \cite{W}.
\subsection{Divided power algebra}
Let $G=GL_N(\mathbb{K})$, where $N n \ge 2$. We denote by $V=\mathbb{K}^N$ be the natural $G$-module  of column vectors.

By $D=\bigoplus_{i\geq 0}D_i$ we denote the divided power algebra of $n$. We will recall some definitions and facts concerning this algebra. For more details we refer to \cite[Section 1.1]{W}. 

We recall that $D$ is defined as the graded dual of the symmetric algebra $S(V^*)$ of $V^*$, where $V^*$ is the dual of $V$. So by definition we have $D_i = (S_i(V^*))^*.$ 

Since the characteristic of $\mathbb{K}$ is zero, $D$ is naturally isomorphic to the symmetric algebra $V$. However, the computations to be made in Sections 5 seem somewhat less involved if one deals with Weyl modules in place of Schur modules. For this reason we work with the divided power algebra and Weyl modules.

If $v \in V$ and $i$ is a nonnegative integer, we have the $i$th divided power  $v^{(i)} \in D_i$ of $v$. In particular, $ v^{(0)}=1 \ \mathrm{and} \ v^{(1)}=v$ for all $v \in V$. We recall that if $i,j$ are nonnegative integers, then the product $v^{(i)}v^{(j)}$ of $v^{(i)}$ and $v^{(j)}$ is given by $v^{(i)}v^{(j)}=\tbinom{i+j}{j}v^{(i+j)},$ where $\tbinom{i+j}{j}$ is the indicated binomial coefficient. 

If $\{e_1, \dots, e_N\}$ is a basis of the vector space $V$, then a basis of the vector space $D_i$ is the set $\{e_1^{(\alpha_1)}\cdots e_N^{(\alpha_N)}: \alpha_1+\cdots + \alpha_N = i\}.$

We recall that $D$ has a graded Hopf algebra structure. Let $\Delta : D \to D \otimes D$ be the comultiplication map of $D$. Explicitly, for a homogeneous element $x= v_1^{(\alpha_1)}\cdots v_t^{(\alpha_t)} \in D_a$, where $v_i \in V$, we have \[ \Delta(x)=\sum_{0\le \beta_i \le \alpha_i} v_1^{(\beta_1)}\cdots v_t^{(\beta_t)} \otimes v_1^{(\alpha_1 - \beta_1)}\cdots v_t^{(\alpha_t - \beta_t)}.\]
For $0 \le b \le a$ we may restrict the above sum to those $\beta_i$ such that $\beta_1 + \cdots +  \beta_t= b$. This yields the following component of the comultiplication map \begin{align*} D_a &\to D_b \otimes D_{a-b}, \\  x &\mapsto \sum_{\substack{0\le \beta_i \le \alpha_i \\ \beta_1+\cdots+\beta_t=b}} v_1^{(\beta_1)}\cdots v_t^{(\beta_t)} \otimes v_1^{(\alpha_1 - \beta_1)}\cdots v_t^{(\alpha_t - \beta_t)}, \end{align*}
which we denote by $\Delta_{b,a-b}:  D_a \to D_b \otimes D_{a-b}$.

\subsection{Weyl modules and Pieri's rule}

If $A, B$ are $G$-modules, then the tensor product $A \otimes B$ becomes a $G$-module with the diagonal action of $G$ given by $g(a \otimes b) = ga \otimes gb$, where $g \in G$, $a \in A, b \in B$. As in Section 2 we will restrict our attention to partitions with at most two parts. For a partition $\mu=(\mu_1, \mu_2)$ we denote by $K_\mu$ the corresponding Weyl module for $G$ defined in \cite[Section 2.1]{W}.  We recall that there is a surjective map of $G$ modules \[\pi_\mu: D_{\mu_1} \otimes D_{\mu_2} \to K_\mu\] defined uniquely up to nonzero scalar multiple (cf. \cite[p. 43]{W} where the notation $\psi_{\lambda / \mu}$ is used for the more general case of skew partitions $\lambda / \mu$ in place of partitions). For example, when $\mu=(a)$ consists of one part, then $K_{(a)}=D_a$. 

Since the characteristic of $\mathbb{K}$ is zero, $K_\mu$ is an irreducible $G$-module. Moreover, if $\lambda$ and $\mu$ are distinct partitions, then the irreducible modules $K_\lambda$ and $K_\mu$ are not isomorphic.

 Suppose $a \ge b$ are nonnegative integers. Then the irreducible decomposition of the $G$-module $D_a \otimes D_b$ is given by Pieri's rule \cite[(2.3.5) Corollary]{W}
\begin{equation}
	D_a \otimes D_b = \bigoplus_{j=0}^{b} K_{(a+b-j,j)}.
\end{equation}
For each $j=0, \dots b$, there is a unique (up to nonzero scalar multiple) surjective map of $G$-modules $\pi_j: D_a\otimes D_a \to K(a+b-j,j)$ which is the composition \begin{equation}\label{pij} \pi_j: 
D_a \otimes D_b \xrightarrow{\Delta_{a+b-j,j}} D_{a+b-j} \otimes D_{j} \xrightarrow{\pi_{(a+b-j,j)}} K(a+b-j,j).
\end{equation}

\subsection{Straightening row semistandard tableaux} 
A Young tableau with entries from $\{1,2, \dots, N \}$ is called semistandard if the elements weakly increase in the rows from left to right and strictly increase in the columns from top to bottom. In the sequel we will need to express elements of Weyl modules as explicit linear combinations of semistandard basis elements.  The next lemma concerns violations of semistandardness in the first column. In order to have simplified notation, we will denote the basis elements $e_1, \dots, e_N$ by their subscripts $1, \dots, N$, respectively. Thus, for example, the element $e_1^{(2)}e_3^{(5)}$ of $D_7$ will be depicted as $1^{(2)}3^{(5)}$.
\begin{lem}[{\cite[Lemma 4.2]{MS3}}]\label{lemglas}Let $\mu=(\mu_1,\mu_2)$ be a partition of length two and let $Z:=
		{1}^{(a_1)}{2}^{(a_2)}  \cdots   {N}^{(a_N)} \otimes
		{1}^{(b_1)}{2}^{(b_2)}  \cdots  {N}^{(b_N)}
	 \in D_{\mu_1} \otimes D_{\mu_2}.$ 
	Then we have the following identities in $K_{\mu}$.
	\begin{enumerate}
		\item If $a_1+b_1>\mu_1$, then $\pi_\mu(Z)=0$.
		\item If $a_1+b_1 \le \mu_1$, then 
		\begin{equation}\label{eqglas}\pi_\mu(Z)=(-1)^{b_1}\sum_{k_2,\dots,k_n}\tbinom{b_2+k_2}{b_2}\cdots\tbinom{b_N+k_N}{b_N}
			\pi_\mu(Z(k_2, \dots, k_N)),
		\end{equation} where \[Z(k_2,\dots,k_N):= 
			{1}^{(a_1+a_2)}{2}^{(a_2-k_2)}  \cdots   {N}^{(a_N-k_N)} \otimes
			{2}^{(b_2+k_2)}  \cdots  {N}^{(b_N+k_N)}
		\] and the sum ranges over all  nonnegative integers $k_2,\dots,k_N$
		such that  $k_2+\dots+k_N=b_1 $ and $k_s \le a_s$ for all $s=2,\dots,N$.	\end{enumerate}	
\end{lem}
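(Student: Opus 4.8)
\textbf{Proof plan for Lemma \ref{lemglas}.} The statement is a ``straightening in the first column'' rule for the two-row Weyl module $K_\mu$, realized as the image of $D_{\mu_1}\otimes D_{\mu_2}$ under $\pi_\mu$. The plan is to reduce everything to the single defining relation of $K_\mu$ and then iterate it in a controlled way, keeping track of the divided-power binomial coefficients. Recall that $K_\mu$ is the image of $\pi_\mu:D_{\mu_1}\otimes D_{\mu_2}\to K_\mu$, and its kernel is generated (as a $G$-submodule, equivalently by the $G$-action on a single box-moving relation) by elements of the form obtained from the comultiplication-then-multiplication map: for $x\in D_{\mu_1},\ y\in D_{\mu_2}$ and any $r\ge 1$, writing $\Delta_{\mu_1-r,r}(x)=\sum x'\otimes x''$ with $x''\in D_r$, one has in $K_\mu$ the relation $\pi_\mu(x\otimes y)=\sum \pm\,\pi_\mu(x'\otimes x''y)$ — this is exactly the ``Capelli''/box-straightening identity that cuts $r$ letters out of the first tensor factor and slides them into the second (cf. \cite[Section 2.1]{W}). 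All the content of the lemma is organizing the bookkeeping when the letters being moved are the $1$'s sitting in the second factor.

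\textbf{Step 1 (the case $a_1+b_1>\mu_1$).} Here I would apply the box-straightening relation that moves all $b_1$ copies of the letter $1$ out of the second factor: by the relation, $\pi_\mu(Z)$ equals (up to the sign $(-1)^{b_1}$) a sum over ways of distributing $b_1$ slots among the letters $1,\dots,N$ of the first factor, of terms $\pi_\mu(Z')$ where $Z'$ has first factor of $D_{\mu_1}$ degree $\mu_1$ but now containing $a_1+b_1>\mu_1$ copies of the letter $1$. Since a divided power $e_1^{(m)}$ with $m>\mu_1$ contributes a tensor factor that cannot fit in degree $\mu_1$ — more precisely, one checks each such term is killed because forming it requires $e_1^{(a_1+j)}$ with total first-factor degree forced to exceed what is available, making the relevant divided power product zero — every term vanishes. (Alternatively, this is immediate from the semistandardness constraint: no semistandard filling of $\mu$ can have more than $\mu_1$ ones, and $\pi_\mu(Z)$ is a combination of semistandard basis vectors.) So I would actually prove part (1) as a byproduct of the computation in part (2), by observing the formula's right-hand side is empty or zero under that hypothesis.

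\textbf{Step 2 (the case $a_1+b_1\le\mu_1$, the main computation).} Apply the box-straightening relation moving all $b_1$ letters equal to $1$ from the second tensor factor across. Explicitly, write the second factor as $e_1^{(b_1)}\cdot w$ with $w=2^{(b_2)}\cdots N^{(b_N)}\in D_{\mu_2-b_1}$; the relation expresses $\pi_\mu(Z)$ as $(-1)^{b_1}$ times a sum over all ways of writing $b_1=k_1+k_2+\cdots+k_N$ with $k_s\ge 0$, of $\pi_\mu$ applied to the element whose first factor is $1^{(a_1+k_1)}2^{(a_2+k_2)}\cdots N^{(a_N+k_N)}$ (with the appropriate divided-power binomial coefficient from $v^{(a)}v^{(k)}=\binom{a+k}{k}v^{(a+k)}$) and whose second factor is $1^{(b_1-k_1-\cdots-k_N)}\cdots$; the only surviving contributions are those with $k_1=b_1$ removed... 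Here I must be slightly careful: the relation I want is the one that, after moving, leaves \emph{no} $1$ in the second factor, i.e. it picks out in the comultiplication the component that takes all $b_1$ ones. The terms are indexed by how those $b_1$ ones land among positions $1,2,\dots,N$ of the first factor. A term where some of them land on letter $1$ itself produces $1^{(a_1+j)}$ in the first factor with $j>0$; I would then invoke part (1) (now provable by a degree count) or a direct ``too many ones'' vanishing to discard all terms with $j<b_1$, and only the term where all $b_1$ ones land on letters $2,\dots,N$ contributes — but that is not quite right either, so the honest bookkeeping is: the surviving terms are exactly those where the $a_1$ original ones plus some number of the moved ones give $1^{(a_1+b_1)}$ (all moved ones going to the letter-$1$ slot is forbidden by nothing, so one must re-examine). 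The cleanest route is: apply straightening to pull out only the letters $\neq 1$ won't work; instead pull all $b_1$ ones out, collect the term $k_1=0$ (the others vanish by part (1) applied with the larger exponent), distribute the remaining $k_2,\dots,k_N$ with $\sum k_s=b_1$, use $2^{(a_2)}2^{(k_2)}=\binom{a_2+k_2}{k_2}2^{(a_2+k_2)}$ etc. to get the coefficients $\binom{a_2+k_2}{a_2}\cdots\binom{a_N+k_N}{a_N}$, and note the constraint $k_s\le a_s$ is \emph{not} needed for the first factor but rather arises from which terms are already semistandard-reduced; comparing with the claimed formula, whose coefficients are $\binom{b_2+k_2}{b_2}\cdots$, shows the two indexings of the sum are related by $k_s\leftrightarrow$ (number of $s$'s moved) versus (residual $s$'s), i.e. a change of summation variable $k_s'=a_s-k_s$ or similar. \textbf{The main obstacle} is precisely getting this reindexing and the binomial coefficients to match the asserted form — reconciling ``how many letters were moved'' with the shape of the coefficient $\binom{b_s+k_s}{b_s}$ and the range $k_s\le a_s$. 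I would handle it by carefully writing the single straightening step with named multi-indices, performing the substitution $k_s\mapsto a_s-k_s$, and checking the sign $(-1)^{b_1}$ and the binomial identities term by term against \cite[Lemma 4.2]{MS3} (or \cite[Section 2.1]{W}).

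\textbf{Step 3 (finish).} Once the single straightening step is matched to the formula, there is nothing left: the right-hand side $Z(k_2,\dots,k_N)$ has first factor $1^{(a_1+a_2)}\cdots$ — wait, that forces $b_1$ to have been absorbed into the $1$-slot, confirming that the intended relation is the one moving the $1$'s \emph{and} the $2$'s appropriately so that the first column becomes $1^{(a_1+b_1)}$ after accounting; I would double check the exponent $a_1+a_2$ versus $a_1+b_1$ in the statement and adjust the narrative accordingly, since that determines whether Step 2's ``pull out all ones'' or ``pull out ones and rebalance with twos'' is the correct single relation. Modulo that verification, the lemma follows immediately from the one defining relation of $K_\mu$ plus the divided-power product formula, with no induction required.
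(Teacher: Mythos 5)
The paper does not prove Lemma \ref{lemglas}; it cites it from \cite[Lemma 4.2]{MS3} and only remarks that the proof given there works over any commutative ring, so there is no internal proof to compare against. Your high-level approach is nevertheless the right one: straighten $\pi_\mu(Z)$ using the box relations presenting $K_\mu$ as a quotient of $D_{\mu_1}\otimes D_{\mu_2}$, tracking coefficients via the divided-power rule $v^{(i)}v^{(j)}=\binom{i+j}{j}v^{(i+j)}$. Your argument for part (1) is correct as a weight count: $Z$ has weight with first coordinate $a_1+b_1$, and $K_\mu$ has no nonzero vectors of such a weight when $a_1+b_1>\mu_1$, since no semistandard filling of shape $\mu$ has more than $\mu_1$ ones. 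You were also right to distrust the exponent $a_1+a_2$ in the displayed $Z(k_2,\dots,k_N)$: a degree count shows the first tensor factor lies in $D_{\mu_1}$ only if that exponent is $a_1+b_1$, so this is a typo in the statement as reproduced, and you should simply say so rather than leave it as a loose end.

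The genuine gap is in Step 2, and your closing claim that ``no induction [is] required'' is exactly where the argument fails. Applying the box relation attached to $x=1^{(a_1+b_1)}2^{(a_2)}\cdots N^{(a_N)}\in D_{\mu_1+b_1}$ and $w=2^{(b_2)}\cdots N^{(b_N)}\in D_{\mu_2-b_1}$ (moving all $b_1$ ones out of the second factor at once) produces, besides the term $\pi_\mu(Z)$ (where all $b_1$ ones stay in the letter-$1$ slot of the second factor, $j_1=b_1$) and the $j_1=0$ terms matching the asserted right-hand side, a family of cross terms with $0<j_1<b_1$ ones left in the second factor. In the regime of part (2) one has $a_1+b_1\le\mu_1$, so part (1) does \emph{not} kill these cross terms, and your write-up leaves them unaccounted for. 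The way to close the gap is to expand each cross term by the inductive hypothesis (induction on the number of ones in the second factor, i.e.\ on $b_1$), reindex by $k_s=j_s+\ell_s$, and then observe that the resulting coefficient of $\pi_\mu(Z(k_2,\dots,k_N))$ is $\prod_s\binom{b_s+k_s}{b_s}$ times $\sum_{j_1=0}^{b_1-1}(-1)^{j_1}\binom{b_1}{j_1}=-(-1)^{b_1}$, using the binomial factorization $\binom{b_s+j_s}{b_s}\binom{b_s+j_s+\ell_s}{b_s+j_s}=\binom{b_s+k_s}{b_s}\binom{k_s}{j_s}$ and the Vandermonde identity $\sum_{j_2+\cdots+j_N=b_1-j_1}\prod_s\binom{k_s}{j_s}=\binom{b_1}{j_1}$. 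That Vandermonde-plus-alternating-sum bookkeeping, together with the induction that makes it legitimate, is precisely the content that your Step 2 acknowledges but never carries out; without it the proposal does not establish the lemma.
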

Even though our paper \cite{MS3} concerns modular representations, the proof of the above lemma given there is valid for any field in place of $\mathbb{K}$ (in fact for any commutative ring). In \cite[Lemma 4.2]{MS3} we used the notation $\Delta_\mu$ for the Weyl module $K_\mu$.

Given $Z:=
{1}^{(a_1)}{2}^{(a_2)}  \cdots   {N}^{(a_N)} \otimes
{1}^{(b_1)}{2}^{(b_2)}  \cdots  {N}^{(b_N)}
\in D_{\mu_1} \otimes D_{\mu_2}$, let $T(Z)$ be the Young tableau with first row consisting (from left to right) of $a_1$ $1$'s, \dots, $a_N$ $N$'s and second row consisting (from left to right) of $b_1$ $1$'s, \dots, $b_N$ $N$'s. We will need the following theorem.\begin{thm}[{\cite[(2.1.15) Proposition]{W}}]A basis of the vector space $K_{\mu}$ is the set $ \{ \pi_\mu(Z): T(Z) \ \text{\ is \ a \ standard \ Young \ tableau} \}.$ \end{thm}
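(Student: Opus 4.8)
Throughout, ``standard Young tableau'' should be read as \emph{semistandard} in the sense defined above (rows weakly increasing, columns strictly increasing); this is the convention of \cite{W}, and it is forced here since the entries of $T(Z)$ come from $\{1,\dots,N\}$ with repetition. Note that for $Z={1}^{(a_1)}\cdots{N}^{(a_N)}\otimes{1}^{(b_1)}\cdots{N}^{(b_N)}$ the two rows of $T(Z)$ are automatically weakly increasing, so for the two-rowed shape $\mu=(\mu_1,\mu_2)$ the only condition to impose on $T(Z)$ is column strictness, i.e. that in each of the first $\mu_2$ columns the entry in row $1$ be strictly smaller than the entry in row $2$. The plan is the classical two-step argument: first show that the semistandard elements $\pi_\mu(Z)$ span $K_\mu$, and then show that their number equals $\dim_{\mathbb{K}}K_\mu$.

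\emph{Spanning.} Since $\pi_\mu\colon D_{\mu_1}\otimes D_{\mu_2}\to K_\mu$ is surjective and the monomial tensors $Z$ form a basis of $D_{\mu_1}\otimes D_{\mu_2}$, the elements $\pi_\mu(Z)$ span $K_\mu$. I would fix, on monomial tensors of a given content, the term order that compares the words obtained by reading the second row of $T(Z)$ from left to right, lexicographically (ties broken by the first row). If $T(Z)$ is not semistandard, let $j\le\mu_2$ be the leftmost column in which column strictness fails. For $j=1$, Lemma \ref{lemglas}(2) is exactly a relation in $K_\mu$ expressing $\pi_\mu(Z)$ as an integral linear combination of elements $\pi_\mu(Z')$; for general $j$ one uses the analogous straightening relation, a consequence of the defining presentation of the Weyl module (equivalently, of the $G$-equivariance together with the comultiplications $\Delta_{b,a-b}$). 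In every case each $T(Z')$ that occurs is strictly larger than $T(Z)$ in the chosen order. As there are only finitely many monomial tensors of a fixed content, iterating this rewrite terminates, and a terminal monomial is necessarily semistandard; hence the semistandard $\pi_\mu(Z)$ span $K_\mu$.

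\emph{Counting.} Each $\pi_\mu(Z)$ is a weight vector for the diagonal torus of $G$ of weight equal to the content of $T(Z)$, so monomial tensors of distinct content map to distinct weight spaces of $K_\mu$. Thus it suffices to check that the number of semistandard tableaux of shape $\mu$ with entries in $\{1,\dots,N\}$ equals $\dim_{\mathbb{K}}K_\mu$. Since $K_\mu$ is the irreducible $G$-module of highest weight $\mu$ (as recalled above), this is Weyl's classical dimension formula; alternatively it follows, staying within the tools of this section, by induction on $\mu_2$ from Pieri's rule $D_a\otimes D_b=\bigoplus_{j=0}^{b}K_{(a+b-j,j)}$ together with the two-row combinatorial Pieri identity $\tbinom{N+\mu_1-1}{\mu_1}\tbinom{N+q-1}{q}=\sum_{j=0}^{q}\#\mathrm{SSYT}_{\le N}(\mu_1+q-j,j)$. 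Because the semistandard $\pi_\mu(Z)$ span $K_\mu$ and are exactly $\dim_{\mathbb{K}}K_\mu$ in number, they form a basis.

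\emph{Main obstacle.} The combinatorial heart of the argument is the spanning step: making precise the column-$j$ straightening relation for $j>1$ and verifying that, in the chosen term order, it rewrites each non-semistandard $\pi_\mu(Z)$ through strictly larger tableaux, so that the straightening algorithm terminates. One can in fact bypass the dimension count entirely by observing that these relations are unitriangular --- the semistandard representative of a content class occurs with coefficient $1$ --- which yields linear independence of the semistandard elements directly; but the required leading-term and termination analysis of the straightening law is the same obstacle in either approach.
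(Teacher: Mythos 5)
The statement you were asked to prove is cited in the paper as \cite[(2.1.15) Proposition]{W}; the paper itself offers no proof, so there is no in-paper argument for me to compare you against. Your sketch is the standard straightening-law proof of the semistandard basis theorem for Weyl modules, which is in fact essentially the argument in the cited reference (Weyman, and before that Akin--Buchsbaum--Weyman). It is correct in outline: you correctly read ``standard'' as semistandard with entries in $\{1,\dots,N\}$; you correctly reduce spanning to a terminating rewriting argument based on Lemma \ref{lemglas}(2) and its column-$j$ analogues, with a suitable term order; and the counting step is legitimate either via Weyl's dimension formula or via induction on $\mu_2$ using Pieri's rule and the combinatorial Pieri identity for two-row SSYT counts.

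The places where you have left work undone are exactly the ones you flag: (i) producing the column-$j$ straightening relation for $j>1$ from the defining presentation of $K_\mu$, and (ii) verifying that your chosen term order (lex on the sorted second-row reading word, ties broken by the first row) is strictly increased by every straightening move. Point (ii) is worth being slightly more careful about than you are: when you straighten a violation at column $j$ with repeated entry $e$, the new second rows coincide with the old one in all entries $<e$ and then shift some $e$'s to larger values, so after re-sorting the word is indeed lex-larger --- this is true, but it requires the observation about the sorted prefix and is not automatic from the $j=1$ case alone. Your alternative via unitriangularity is the cleaner route (the semistandard representative occurs with coefficient $1$ in the straightening expansion), and it avoids having to invoke Weyl's dimension formula or the Pieri count, so if you were to write this out in full that is the version I would recommend. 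In short: a correct sketch, faithful to the standard proof, with the nontrivial technical content honestly identified rather than concealed.
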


\section{Images of intersection matrices}
Throughout this section we fix nonnegative integers $ t, k, n $ satisfying \begin{equation}\label{int}  t \le k \le n/2. \end{equation}
We recall that our convention on binomial coefficients is $\tbinom{a}{b}=0$ if $b>a$ or $b<0$. The following integers are important for Theorem \ref{main1}.
\begin{defn}\label{lambda} Suppose $l$ is a nonnegative integer such that $l \le t$. For $j=0,1, \dots, t$ define the integer \[\lambda_j(t,k,n;l) :=\sum_{s=0}^{j}(-1)^{j-s}\tbinom{j}{s}\tbinom{k-s}{l-s}\tbinom{n-k-j+s}{t-l-j+s}.\]
\end{defn}

\begin{lem}\label{bas}A basis of the vector space $\Hom_G(D_{n-k}\otimes D_k, D_{n-t}\otimes D_t)$ is the set $\{ \Psi_0, \dots, \Psi_t \}$, where the map $\Psi_l$ is the composition 
	\begin{align*}D_{n-k} \otimes D_k &\xrightarrow{\Delta_{n-k-t+l,t-l} \otimes \Delta_{k-l,l}}D_{n-k-t+l} \otimes D_{t-l}\otimes D_{k-l} \otimes D_l \\&\xrightarrow{1 \otimes \tau \otimes 1} D_{n-k-t+l} \otimes D_{k-l}\otimes D_{t-l} \otimes D_l \\&\xrightarrow{m_{n-k-t+l,k-l} \otimes m_{t-l,l}}D_{n-t}\otimes D_t,\end{align*}
	where $\tau :D_{t-l} \otimes D_{k-l} \to D_{k-l} \otimes D_{t-l}$ is given by $\tau(x\otimes y) = y \otimes x$.
\end{lem}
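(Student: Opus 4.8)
The plan is to identify $\operatorname{Hom}_G(D_{n-k}\otimes D_k, D_{n-t}\otimes D_t)$ with a space whose dimension is easy to compute and then exhibit the $\Psi_l$ as a spanning set of the right cardinality. By Pieri's rule, $D_{n-k}\otimes D_k \cong \bigoplus_{j=0}^{k} K_{(n-j,j)}$ and $D_{n-t}\otimes D_t \cong \bigoplus_{j=0}^{t} K_{(n-j,j)}$ (here the hypothesis $t\le k\le n/2$ guarantees all these partitions are genuine and that no coincidences or truncations occur). Since the $K_{(n-j,j)}$ are pairwise non-isomorphic irreducibles and $\operatorname{char}\mathbb{K}=0$, Schur's lemma gives $\dim_{\mathbb{K}}\operatorname{Hom}_G(D_{n-k}\otimes D_k, D_{n-t}\otimes D_t) = |\{0,1,\dots,k\}\cap\{0,1,\dots,t\}| = t+1$. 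So it suffices to show that $\Psi_0,\dots,\Psi_t$ are linearly independent (they are manifestly $t+1$ elements of the Hom space, being composites of $G$-maps).

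To prove linear independence I would compute the effect of each $\Psi_l$ on a single convenient element and read off enough information to separate the maps. A good test element is $Z_0 := e_1^{(n-k)}\otimes e_2^{(k)}$, or more flexibly $e_1^{(n-k-s)}e_2^{(s)}\otimes e_2^{(k)}$ for suitable $s$, chosen so that the comultiplications $\Delta_{n-k-t+l,t-l}\otimes\Delta_{k-l,l}$, the swap $1\otimes\tau\otimes1$, and the multiplications $m_{n-k-t+l,k-l}\otimes m_{t-l,l}$ can all be carried out explicitly using the divided-power product rule $v^{(i)}v^{(j)}=\binom{i+j}{j}v^{(i+j)}$. The upshot of such a computation is that $\Psi_l(Z)$ is a linear combination of monomials $e_1^{(n-t-a)}e_2^{(a)}\otimes e_2^{(t)}$ (and possibly monomials involving $e_2$ in the first slot in a controlled way), with coefficients that are products of binomial coefficients depending on $l$. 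The key point is to pick the test vector(s) so that the resulting coefficient matrix $\big(c_{l,a}\big)$ — indexed by $l$ on one side and by the "shape" of the output monomial on the other — is triangular, or at least provably nonsingular; the natural expectation is that $\Psi_l$ is the unique (up to scalar) combination landing in the summand decomposition in a way that "detects degree $l$ of overlap", mirroring the combinatorial meaning of the intersection matrix $U_{t,k,l}$.

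The main obstacle I anticipate is bookkeeping: writing $\Psi_l(Z)$ in the basis of $D_{n-t}\otimes D_t$ cleanly enough to see non-vanishing of the relevant determinant, since the three-step composite produces nested sums of binomial coefficients (and one must be careful that the middle tensor factors $D_{t-l}$, $D_{l}$ etc. are nonzero, which again is where $t\le k\le n/2$ is used). A cleaner alternative, which I would pursue if the direct computation gets unwieldy, is to instead show that no nontrivial linear combination $\sum_l c_l\Psi_l$ is zero by projecting onto one Pieri summand at a time: precompose with the inclusion of $K_{(n-j,j)}\hookrightarrow D_{n-k}\otimes D_k$ and postcompose with the projection onto $K_{(n-j,j)}\subseteq D_{n-t}\otimes D_t$, obtaining a scalar $\lambda_j(t,k,n;l)$ (this is exactly Definition \ref{lambda}, and is presumably verified in a separate lemma nearby); linear independence of $\{\Psi_l\}$ then reduces to the statement that the $(t+1)\times(t+1)$ matrix $\big(\lambda_j(t,k,n;l)\big)_{j,l}$ is invertible, which can be checked by a triangularity argument on the binomial sums. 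Either route closes the proof once the dimension count is in hand.
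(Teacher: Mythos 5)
Your dimension count is right and matches the paper: Pieri's rule plus Schur's lemma gives $\dim\Hom_G(D_{n-k}\otimes D_k, D_{n-t}\otimes D_t)=t+1$, and your first proposed test element $Z_0=e_1^{(n-k)}\otimes e_2^{(k)}$ is exactly the one the paper uses. So the overall approach is the same. But the proposal stops short of the one step that actually has to be done, and the difficulties you anticipate are not real. On the highest-weight test vector $Z_0$, the comultiplication $\Delta_{n-k-t+l,t-l}$ applied to the pure divided power $1^{(n-k)}$ has a \emph{single} term $1^{(n-k-t+l)}\otimes 1^{(t-l)}$, and likewise $\Delta_{k-l,l}(2^{(k)})=2^{(k-l)}\otimes 2^{(l)}$; after the swap, the multiplications $m$ combine divided powers of \emph{distinct} basis vectors, so no binomial coefficients appear. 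Thus
\[
\Psi_l\bigl(1^{(n-k)}\otimes 2^{(k)}\bigr)=1^{(n-k-t+l)}2^{(k-l)}\otimes 1^{(t-l)}2^{(l)},
\]
a single basis monomial, and these monomials are visibly distinct as $l$ ranges over $0,\dots,t$. Linear independence is therefore immediate; there is no coefficient matrix, no nested sums, and no determinant to check.

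Two concrete inaccuracies to correct. First, you predicted the outputs would have the shape $e_1^{(n-t-a)}e_2^{(a)}\otimes e_2^{(t)}$; in fact the second tensor factor is $1^{(t-l)}2^{(l)}$, not $e_2^{(t)}$, and there is no summation over $a$. Second, your fallback route (projecting onto each $K_{(n-j,j)}$ to produce the scalars $\lambda_j(t,k,n;l)$ and arguing the matrix $(\lambda_j)_{j,l}$ is invertible) would indeed prove independence, but the invertibility of that matrix is not established anywhere and is far from obvious; it would be an additional substantial claim. Since the direct evaluation of $\Psi_l$ on $Z_0$ is a one-line computation yielding distinct monomials, you should simply carry it out rather than defer to the fallback.
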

	\begin{proof} From Pieri's rule we have $D_{n-k} \otimes D_{k} = \bigoplus_{j=0}^{k} K_{(n-j,j)}$ and $D_{n-t} \otimes D_{t} = \bigoplus_{j=0}^{t} K_{(n-j,j)}$. Since $t \le k$ we conclude from Schur's lemma that the dimension  of $\Hom_G(D_{n-k}\otimes D_k, D_{n-t}\otimes D_t)$ is equal to $t$.
		
		From the definition of the map $\Psi_l$ we have \[\Psi_l(1^{(n-k)}\otimes 2^{{(k)}})=1^{(n-k-t+l)}2^{(k-l)}\otimes 1^{(t-l)} 2^{(l)}.\] Hence the maps $\Psi_0, \dots, \Psi_t$ are linearly independent and thus are a basis of the vector space  $\Hom_G(D_{n-k}\otimes D_k, D_{n-t}\otimes D_t)$.
		\end{proof}
This first main result of the present section is the following.	\begin{thm}\label{main1}
		Suppose $\Psi \in \Hom_G(D_{n-k}\otimes D_k, D_{n-t}\otimes D_t)$ and let $\Psi = \sum_{l=0}^{t} c_l \Psi_l$ where $c_l \in \mathbb{K}$. Then the irreducible decomposition of the image of the map $\Psi$ is 
		\begin{equation}\label{irrdec1} \Ima(\Psi) = \bigoplus_{j \in J(t,k,n)}K_{(n-j,j)},\end{equation} where $J(t,k,n)$ is the subset of $\{0,1, \dots, t\}$ consisting of those $j$ such that there exists an $l \in \{0,1,\dots,t\}$ satisfying $c_l \neq 0$ and $\lambda_j(t,k,n;l) \neq 0.$
	\end{thm}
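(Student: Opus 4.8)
The strategy is to reduce the computation of $\Ima(\Psi)$ to understanding, for each irreducible $K_{(n-j,j)}$ sitting inside $D_{n-k}\otimes D_k$ (with multiplicity one, by Pieri), whether $\Psi$ is zero or an isomorphism on that summand. Since $D_{n-k}\otimes D_k = \bigoplus_{j=0}^{k} K_{(n-j,j)}$ and $D_{n-t}\otimes D_t = \bigoplus_{j=0}^{t} K_{(n-j,j)}$ each have all multiplicities equal to one, Schur's lemma forces $\Psi$ to act as a scalar on each common summand $K_{(n-j,j)}$ with $0\le j\le t$, and to kill the summands with $j>t$. Thus $\Ima(\Psi) = \bigoplus_{j} K_{(n-j,j)}$ where the sum is over those $j\le t$ for which this scalar is nonzero. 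So the whole theorem comes down to identifying that scalar: I claim that the scalar by which $\Psi = \sum_l c_l \Psi_l$ acts on $K_{(n-j,j)}$ is (a nonzero multiple of) $\sum_l c_l\, \lambda_j(t,k,n;l)$, which is nonzero precisely when $j\in J(t,k,n)$.

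**Extracting the scalar.** To compute the scalar for a single $\Psi_l$ on $K_{(n-j,j)}$, I would use the projection $\pi_j : D_{n-k}\otimes D_k \to K_{(n-j,j)}$ from equation \eqref{pij}, which is the composite of $\Delta_{n-j,j}$ with $\pi_{(n-j,j)}$. Pick a convenient highest-weight-type vector, e.g. apply $\pi_j$ to $1^{(n-k)}\otimes 2^{(k)}$ to get a nonzero generator $v_j$ of $K_{(n-j,j)}$, and then apply $\Psi_l$ to $1^{(n-k)}\otimes 2^{(k)}$ and project its image into the $K_{(n-j,j)}$-component of $D_{n-t}\otimes D_t$ via $\pi_j^{t} := \pi_{(n-j,j)}\circ\Delta_{n-j,j}$ on $D_{n-t}\otimes D_t$. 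From Lemma \ref{bas} we already have the explicit formula $\Psi_l(1^{(n-k)}\otimes 2^{(k)}) = 1^{(n-k-t+l)}2^{(k-l)}\otimes 1^{(t-l)}2^{(l)}$. The task is then: apply $\Delta_{n-j,j}$ to this element of $D_{n-t}\otimes D_t$ — which splits each divided power $i^{(m)}$ into $\sum_{p} i^{(p)}\otimes i^{(m-p)}$ with the appropriate binomial weight — and then apply $\pi_{(n-j,j)}$, using the straightening Lemma \ref{lemglas} to reduce everything to the standard basis element with first column $\binom{1}{2}$ repeated, i.e. $1^{(n-j)}2^{(j)}\otimes 2^{(j)}$ shape; reading off its coefficient. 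The binomial bookkeeping — a sum over the "amount of $1$'s and $2$'s sent down to the second factor" — should collapse, via Lemma \ref{lemglas}(2) and Vandermonde-type identities, to exactly $\lambda_j(t,k,n;l) = \sum_{s=0}^{j}(-1)^{j-s}\binom{j}{s}\binom{k-s}{l-s}\binom{n-k-j+s}{t-l-j+s}$; the index $s$ should be the number of $1$'s (coming from the length-$(t-l)$ block of $1$'s and the residual) pushed into the bottom, and the sign $(-1)^{j-s}$ is the straightening sign from part (2) of the lemma.

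**Assembling.** Once the scalar for $\Psi_l$ on $K_{(n-j,j)}$ is shown to be $\lambda_j(t,k,n;l)$ times a fixed nonzero constant independent of $l$ (that constant being the normalization relating $v_j$ to $\pi_j^{t}(\Psi_l(1^{(n-k)}\otimes 2^{(k)}))$ — one must be slightly careful that it does not depend on $l$, which it does not because it only involves the shape $(n-j,j)$ and the chosen generator), linearity gives that $\Psi=\sum_l c_l\Psi_l$ acts on $K_{(n-j,j)}$ by the scalar $\text{(const)}\cdot\sum_l c_l\lambda_j(t,k,n;l)$. For $j\le t$ this is nonzero iff some $l$ has $c_l\neq 0$ and the relevant $\lambda$'s don't all cancel — here I would actually argue more carefully: the theorem's definition of $J(t,k,n)$ says $j\in J$ iff \emph{there exists} $l$ with $c_l\neq 0$ and $\lambda_j(t,k,n;l)\neq 0$, which a priori is weaker than $\sum_l c_l\lambda_j(t,k,n;l)\neq 0$. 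Reconciling these is a genuine subtlety: one needs that for fixed $j$, the nonzero values $\lambda_j(t,k,n;l)$ (over $l$) cannot cancel in the combination — or the statement must be read as: the image contains $K_{(n-j,j)}$ as long as the scalar is nonzero, and the $J$ in the theorem is the correct index set. I expect the resolution is that in Corollary \ref{main3} the coefficients $c_l$ are those of a combination of the \emph{intersection matrices} $U_{t,k,l}$, and the change of basis from the $U$'s to the $\Psi$'s is triangular, so that the scalar on $K_{(n-j,j)}$ for $A=\sum c_l U_{t,k,l}$ is a single term $c_l'\lambda_j(\dots;l)$-type expression rather than a sum — but for Theorem \ref{main1} as stated one should simply prove $\Ima(\Psi)=\bigoplus_{j: \mu_j(\Psi)\neq 0}K_{(n-j,j)}$ where $\mu_j(\Psi)=\sum_l c_l\lambda_j(t,k,n;l)$, and then observe this equals the stated description. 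The main obstacle, therefore, is twofold: (i) the explicit straightening computation identifying the scalar with $\lambda_j$, where keeping track of the binomial coefficients and signs from Lemma \ref{lemglas} and the comultiplication is delicate; and (ii) matching the "there exists $l$" formulation of $J(t,k,n)$ with the actual vanishing condition on the scalar, which I would handle by unwinding the precise relationship between the $\Psi_l$ and the intersection matrices used in Corollary \ref{main3}.
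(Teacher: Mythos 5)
Your approach is the same as the paper's: exploit that both $D_{n-k}\otimes D_k$ and $D_{n-t}\otimes D_t$ are multiplicity-free by Pieri's rule, so $\Ima(\Psi)$ is determined by which projections $\pi_j\circ\Psi$ onto $K_{(n-j,j)}$ are nonzero; then evaluate $\pi_j\circ\Psi_l$ on $1^{(n-k)}\otimes 2^{(k)}$, straighten the resulting tableaux using Lemma~\ref{lemglas}(2), and obtain $\pi_j\circ\Psi_l(1^{(n-k)}\otimes 2^{(k)})=\lambda_j(t,k,n;l)\cdot\pi_{(n-j,j)}(1^{(n-k)}2^{(k-j)}\otimes 2^{(j)})$, the latter element nonzero since the corresponding tableau is semistandard. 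So your part~(i) is precisely what the paper does, and the scalar comes out as $\lambda_j(t,k,n;l)$ itself with no $l$-dependent normalization to worry about.

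Your concern~(ii) is the genuine one, and you should not try to argue it away via a triangular change of basis. The computation shows that $K_{(n-j,j)}\subseteq\Ima(\Psi)$ if and only if $\sum_{l=0}^t c_l\,\lambda_j(t,k,n;l)\neq 0$, which is strictly stronger than the theorem's ``there exists $l$ with $c_l\neq 0$ and $\lambda_j(t,k,n;l)\neq 0$''. The two can differ for a genuine linear combination: take $n=4$, $k=2$, $t=1$, $c_0=c_1=1$. One computes $\lambda_1(1,2,4;0)=-1$ and $\lambda_1(1,2,4;1)=1$, so $1\in J(1,2,4)$ by the stated criterion; yet $\sum_l c_l\lambda_1(1,2,4;l)=0$, and indeed $U_{1,2,0}+U_{1,2,1}$ is the $4\times 6$ all-ones matrix of rank $1$, not rank $4=\dim S^{(4)}+\dim S^{(3,1)}$ as the theorem would predict. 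What the paper's proof (and yours) actually establishes is $\Ima(\Psi)=\bigoplus K_{(n-j,j)}$ over those $j\in\{0,\dots,t\}$ with $\sum_l c_l\lambda_j(t,k,n;l)\neq 0$; the two formulations of $J(t,k,n)$ agree only when a single $c_l$ is nonzero, as for an individual intersection matrix $U_{t,k,l}$. For a general linear combination you should state the criterion in terms of the sum.
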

\begin{proof}
First, we compute the composition \begin{equation}\label{comp1}D_{n-k} \otimes D_k \xrightarrow{\Psi_l} D_{n-t} \otimes D_t \xrightarrow{\pi_j} K_{(n-j,j)}.\end{equation} Using the definitions of $\Psi_l$ and $\pi_j$ (for the latter see (\ref{pij})) we have \begin{equation}
	\Psi_l(1^{(n-k)}\otimes2^{(k)})=1^{(n-k+l-t)}2^{(k-l)} \otimes 1^{(t-l)} 2^{(l)}
\end{equation}
and 
\begin{equation}\label{im33}
	\pi_j \circ \Psi_l(1^{(n-k)}\otimes2^{(k)})=\sum_{j_1+j_2=j}\tbinom{n-k-j_1}{t-l-j_1}\tbinom{k-j_2}{l-j_2} \pi_{(n-j,j)}(1^{(n-k-j_1)}2^{(k-j_2)} \otimes 1^{(j_1)} 2^{(j_2)}),
\end{equation}
where the sum is over all nonnegative integers $j_1, j_2$ satisfying \begin{align*}j_1+j_2&=j, \\  j_1 &\le t-l, \\ j_2 &\le l.\end{align*}  We have $j_1 =j-j_2 \le t-j_2 \le k -j_2$. Hence we may apply Lemma \ref{lemglas}(2) to each term $\pi_{(n-j,j)}(1^{(n-k-j_1)}2^{(k-j_2)} \otimes 1^{(j_1)} 2^{(j_2)})$ of the right hand side of eq. (\ref{im33}) to obtain
\begin{align}\label{im34}
	&\pi_{(n-j,j)}(1^{(n-k-j_1)}2^{(k-j_2)} \otimes 1^{(j_1)} 2^{(j_2)})\\\nonumber&=(-1)^{j_1}\tbinom{j_1+j_2}{j_2}\pi_{(n-j,j)}(1^{(n-k)}2^{(k-j_2-j_1)} \otimes 2^{(j_1+j_2)} )\\\nonumber&=(-1)^{j-j_2}\tbinom{j}{j_2}\pi_{(n-j,j)}(1^{(n-k)}2^{(k-j)}\otimes 2^{(j)}).\end{align}
Substituting eq. (\ref{im34}) in eq. (\ref{im33}) we obtain
\begin{align}\label{im35}
	&\pi_j \circ \Psi_l(1^{(n-k)}\otimes2^{(k)})\\\nonumber&=\Big(\sum_{j_1+j_2=j}(-1)^{j-j_2}\tbinom{n-k-j_1}{t-l-j_1}\tbinom{k-j_2}{l-j_2} \tbinom{j}{j_2}\Big) \pi_{(n-j,j)}(1^{(n-k)}2^{(k-j)}\otimes 2^{(j)})\\\nonumber&= \Big( \sum_{j_2}(-1)^{j-j_2}\tbinom{n-k-j+j_2}{t-l-j+j_2}\tbinom{k-j_2}{l-j_2} \tbinom{j}{j_2}\Big)\pi_{(n-j,j)}(1^{(n-k)}2^{(k-j)}\otimes 2^{(j)}),\end{align} where the second sum is over all nonnegative integers $j_2$ satisfying \[\max\{j-t+l\} \le j_2 \le \min\{l,j\}.\]
According to our convention on binomial coefficients we have $\tbinom{a}{b}=0$ if $b<0$ or $b>a$. Hence we may write eq. (\ref{im35}) as
\begin{align*}\label{im36}
	&\pi_j \circ \Psi_l(1^{(n-k)}\otimes2^{(k)})\\&= \Big( \sum_{j_2 = 0}^{j} (-1)^{j-j_2}\tbinom{n-k-j+j_2}{t-l-j+j_2}\tbinom{k-j_2}{l-j_2} \tbinom{j}{j_2}\Big) \pi_{(n-j,j)}(1^{(n-k)}2^{(k-j)}\otimes 2^{(j)})\end{align*}
and thus using Definition \ref{lambda} we obtain 
\begin{equation}\label{im36}
	\pi_j \circ \Psi_l(1^{(n-k)}\otimes2^{(k)})= \lambda_j(t,k,n;l) \pi_{(n-j,j)}(1^{(n-k)}2^{(k-j)}\otimes 2^{(j)}).\end{equation}

Now from (\ref{int}) we have $t \le n-k$ and thus $j \le n-k$. This means that the element $\pi_{(n-j,j)}(1^{(n-k)}2^{(k-j)}\otimes 2^{(j)})$ of the Weyl module $K_{(n-j,j)}$ in the right hand side of eq. (\ref{im36}),  is nonzero since the corresponding tableau is semistandard. Hence the map (\ref{comp1}) is nonzero if and only if $\lambda_j(t,k,n;l) \neq 0$. Equivalently, the irreducible module $K_{(n-j,j)}$ is a summand of the image $\Ima (\Psi_l)$ of $\Psi_l$ if and only if $\lambda_j(t,k,n;l) \neq 0$. From this, the fact that $\Psi = \sum_{l=0}^{t} c_l\Psi_l$, and the irreducible decomposition $D_{n-t} \otimes D_t =\bigoplus_{j=0}^{t} K_{(n-j,j)}$ of the codomain $D_{n-t} \otimes D_t$ of the map $\Psi : D_{n-k} \otimes D_k \to D_{n-t} \otimes D_t$ according to Pieri's rule, we conclude that the irreducible decomposition of $\Ima(\Psi)$ is given by (\ref{irrdec1}).\end{proof}

\begin{rem} We note that integers closely related to  $\lambda_j(t,k,n;l)$ have appeared in \cite[Corollary 4.4]{MMS} in the problem of determining presentations of Schur modules and Specht modules. In that paper, $\Hom_G(\Lambda^{n-k}\otimes \Lambda^k, \Lambda^{n-k}\otimes \Lambda^k)$
was studied, where $\Lambda^{j}$ is the $i$th exterior power of the natural $G$-module $V$. Note that there we have $k=t$ so that in the present paper we are considering a more general situation. Also the  $\lambda_j(t,k,n;l)$ have appeared in \cite[Definition 1.4]{DEGJPP} where diagonal forms over the integers of the intersection matrices $U_{t,k,n}$ were studied.\end{rem}

We now consider the Schur functor. Suppose $N \ge n$. We recall from \cite[Section 6.3]{Gr} that the Schur functor is a functor $f$ from the category of homogeneous polynomial representations of  $G$ of degree $n$ to the category of $\mathfrak{S}_n$-modules. For $M$ an object in the first category,  $f(M)$ is the weight subspace $M_\alpha$ of $M$, where $\alpha = (1^n, 0^{N-n})$ and for $\theta :M \to N$ a morphism in the first category, $f(\theta)$  is the restriction $M_\alpha \to N_\alpha$ of $\theta$. It is well known that $f$ is an exact functor such that \begin{center}
	$f(K_{\mu})\cong S^{\mu}$ and $f(D_{n-i} \otimes D_{i}) = M_{i}$. \end{center}Moreover, $f$ induces an isomorphism of vector spaces \[\Hom_G(D_{n-k}\otimes D_k, D_{n-t}\otimes D_t) \cong \Hom_{\mathfrak{S}_n}(M_k, M_t).\]
	
	\begin{rem}\label{rem}
		From the definition of the map $\psi_l: D_{n-k}\otimes D_k \to D_{n-t}\otimes D_t$ given in Lemma \ref{bas} and the definition of the Schur functor, it follows that the matrix of the linear map $f(\Psi_l):M_k \to M_t$ with respect to some ordering of the bases of $M_k$ and $M_t$ consisting of $k$-subsets and $t$-subsets of $X$ respectively, is the intersection matrix $U_{t,k,l}$.
	\end{rem}

In the next result we obtain the decomposition of the image of any $\mathfrak{S}_n$-map $M_k \to M_t$. \begin{cor}
	\label{main3}Consider a linear combination $A= \sum_{l=0}^{t} c_l U_{t,k,l}$ of intersection matrices, where $c_l \in \mathbb{K}$. Then the irreducible decomposition of the image of $A$ is 
	\begin{equation*}\label{irrdec3} \bigoplus_{j \in J(t,k,n)}S^{(n-j,j)},\end{equation*} where $J(t,k,n)$ is as in Theorem \ref{main1}. In particular, the rank of $A$ is equal to 
	\begin{equation*}\label{rank1} \sum_{j \in J(t,k,n)}\big(\tbinom{n}{j} - \tbinom{n}{j-1} \big).\end{equation*}
\end{cor}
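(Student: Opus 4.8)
The plan is to derive this as a formal consequence of Theorem \ref{main1} by transporting everything through the Schur functor $f$ and then reading off dimensions, so almost all of the real content has already been done.

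First I would identify the matrix $A$ with a map obtained from $\Psi$. By Remark \ref{rem}, for each $l$ the $\mathfrak{S}_n$-map $f(\Psi_l):M_k\to M_t$ has matrix $U_{t,k,l}$ with respect to the bases of $M_k$ and $M_t$ given by $k$-subsets and $t$-subsets of $X$. Since $f$ induces a \emph{linear} isomorphism $\Hom_G(D_{n-k}\otimes D_k,D_{n-t}\otimes D_t)\cong\Hom_{\mathfrak{S}_n}(M_k,M_t)$, the map $f(\Psi)$ attached to $\Psi=\sum_{l=0}^{t}c_l\Psi_l$ has matrix $A=\sum_{l=0}^{t}c_lU_{t,k,l}$; in particular $\Ima(A)=\Ima\big(f(\Psi)\big)$ as $\mathfrak{S}_n$-submodules of $M_t$.

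Next I would use exactness of $f$ to commute it past the formation of the image. Factor $\Psi$ as $\iota\circ\bar\Psi$, where $\bar\Psi:D_{n-k}\otimes D_k\twoheadrightarrow\Ima(\Psi)$ and $\iota:\Ima(\Psi)\hookrightarrow D_{n-t}\otimes D_t$. Applying the exact functor $f$ yields $f(\Psi)=f(\iota)\circ f(\bar\Psi)$ with $f(\bar\Psi)$ surjective and $f(\iota)$ injective, hence $\Ima\big(f(\Psi)\big)\cong f\big(\Ima(\Psi)\big)$ as $\mathfrak{S}_n$-modules. Now feed in the decomposition \eqref{irrdec1} from Theorem \ref{main1}: since $f$ is additive and $f(K_{(n-j,j)})\cong S^{(n-j,j)}$, we get
\[
\Ima(A)\cong f\Big(\bigoplus_{j\in J(t,k,n)}K_{(n-j,j)}\Big)\cong\bigoplus_{j\in J(t,k,n)}S^{(n-j,j)},
\]
which is the claimed irreducible decomposition (note $J(t,k,n)$ here is literally the set defined in Theorem \ref{main1}). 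For the rank statement I would simply take $\mathbb{K}$-dimensions: $\rank(A)=\dim_{\mathbb{K}}\Ima(A)=\sum_{j\in J(t,k,n)}\dim_{\mathbb{K}}S^{(n-j,j)}$, and substitute the dimension formula \eqref{dimsp}, $\dim S^{(n-j,j)}=\tbinom{n}{j}-\tbinom{n}{j-1}$, to obtain the stated sum.

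The only point that needs any care is the compatibility of $f$ with images, i.e. its exactness together with $f(K_\mu)\cong S^\mu$ and $f(D_{n-i}\otimes D_i)=M_i$ — all of which are quoted just before Remark \ref{rem} — plus checking that $J(t,k,n)$ in the $\mathfrak{S}_n$-picture is the same index set as in Theorem \ref{main1}, which it is because the coefficients $c_l$ and the integers $\lambda_j(t,k,n;l)$ are the same on both sides. I do not expect a genuine obstacle here: the corollary is essentially a translation, and all the combinatorics (the evaluation of $\pi_j\circ\Psi_l$ as $\lambda_j(t,k,n;l)$ times a semistandard generator) was carried out in the proof of Theorem \ref{main1}.
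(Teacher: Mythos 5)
Your proposal is correct and follows the same route as the paper: apply the Schur functor to Theorem \ref{main1}, translate $K_{(n-j,j)}$ to $S^{(n-j,j)}$, match $f(\Psi_l)$ with $U_{t,k,l}$ via Remark \ref{rem}, and read off the rank from the dimension formula \eqref{dimsp}. You merely make explicit the (standard) exactness argument that $f(\Ima\Psi)=\Ima f(\Psi)$, which the paper leaves implicit.
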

\begin{proof} Suppose $\Psi \in \Hom_G(D_{n-k}\otimes D_k, D_{n-t}\otimes D_t)$ is given by $\Psi = \sum_{l=0}^{t} c_l \Psi_l$. Applying the Schur functor to Theorem \ref{main1} we obtain that the irreducible decomposition of the image of the map $f(\Psi) \in \Hom_{\mathfrak{S}_n}(M_k, M_l)$ is 
	\begin{equation*}\label{irrdec2} \Ima(f(\Psi)) = \bigoplus_{j \in J(t,k,n)}S^{(n-j,j)},\end{equation*} where $J(t,k,n)$ is as in Theorem \ref{main1}.
	From Remark \ref{rem} it follows that the irreducible decomposition of the image of $A$ is 
	$\label{irrdec3} \bigoplus_{j \in J(t,k,n)}S^{(n-j,j)}.$ In particular, the rank of the matrix $A$ is equal to 
$\label{rank1} \sum_{j \in J(t,k,n)}\big(\tbinom{n}{j} - \tbinom{n}{j-1} \big)$ according to (\ref{dimsp}).
\end{proof}
\begin{rems}(1)
Corollary  \ref{main3} generalizes \cite[Corollary 14]{GGM} that proves an expression for the rank of the individual intersection matrices $U_{t,k,l}$ by completely different methods. (2) In Corollary \ref{main3} consider $l=t$, $c_0=\cdots=c_{t-1}=0$ and $c_t=1$. Then $A=U_{t,k,t}=W_{t,k}$ is the inclusion matrix. Let $ j \in \{0,1,\dots,t\}$. Then from Definition \ref{lambda} we have
\[\lambda_j(t,k,n;t) =\sum_{s=0}^{j}(-1)^{j-s}\tbinom{j}{s}\tbinom{k-s}{t-s}\tbinom{n-k-j+s}{-j+s}=\tbinom{k-j}{t-j},\]
where in the last equality we used our convention on binomial coefficients that $\tbinom{n-k-j+s}{-j+s}=0$ if $-j+s<0$. Thus with the notation of Corollary \ref{main3} we obtain $J(t,k,n)=\{0,1,\dots,t\}$ and the rank of the matrix $A=W_{t,k}$ is equal to \begin{equation*} \sum_{j \in J(t,k,n)}\big(\tbinom{n}{j} - \tbinom{n}{j-1} \big)=\big(\tbinom{n}{0} - \tbinom{n}{-1} \big)+\big(\tbinom{n}{1} - \tbinom{n}{0} \big)+\cdots +\big(\tbinom{n}{t} - \tbinom{n}{t-1} \big)=\tbinom{n}{t}.\end{equation*}
In other words we obtain that the inclusion matrix $W_{t,k}$ has maximal rank. This is a well known result \cite{Go, Wi, Ca, FeHu}.
\end{rems}

\section{Acknowledgments}
We are grateful to the referee for carefully reading the manuscript and for providing useful suggestions.

\end{document}